\documentclass[12pt, a4paper]{amsart}
\usepackage{headers}
\title{Multiplicity one theorem over characteristic 2}
\author{Dor Mezer}
\date{\today}
\keywords{Distribution, Multiplicity one, Gelfand pair, invariant distribution}
\subjclass[2010]{20G05, 20G25, 22E50, 46F10}
\begin{document}
\begin{abstract}
It is shown for all local fields $\FF$ which are of characteristic different from $2$ that any distribution on $GL_{n+1}(\FF)$ which is invariant under conjugation by $GL_n(\FF)$ is also invariant under transposition.
In this paper we give an adaptation of the proof of this theorem to fields of characteristic 2.
\end{abstract}
\maketitle
\section{Introduction}
Let $\FF$ be a local field of characteristic $2$.
In this paper we prove the following theorem:
\begin{theorem}\label{goal}
Any distribution on $GL_{n+1}(\FF)$ invariant under conjugation by $GL_n(\FF)$ is also invariant under transposition.
\end{theorem}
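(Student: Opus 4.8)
The plan is to follow, step by step, the proof of the characteristic~$\neq2$ case due to Aizenbud--Gourevitch--Rallis--Schiffmann, and to patch each place where their argument relies on a fact special to characteristic $0$ or odd residue characteristic. The first step is \emph{linearisation}. Write an element of $\mathfrak{gl}_{n+1}(\FF)$ in block form $\left(\begin{smallmatrix} A & u\\ w & d\end{smallmatrix}\right)$ with $A\in\mathfrak{gl}_n(\FF)$, $u\in\FF^n$, $w\in(\FF^n)^*$, $d\in\FF$. Conjugation by $h\in GL_n(\FF)$ sends it to $\left(\begin{smallmatrix} hAh^{-1} & hu\\ wh^{-1} & d\end{smallmatrix}\right)$ and transposition to $\left(\begin{smallmatrix} A^t & w^t\\ u^t & d\end{smallmatrix}\right)$, while $d$ is fixed by both. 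Since the factor $\FF$ is fixed, it suffices to prove the \emph{linear statement}: every $GL_n(\FF)$-invariant distribution on $W:=\mathfrak{gl}_n(\FF)\times\FF^n\times(\FF^n)^*$ (adjoint action on the first factor, standard and dual actions on the others) is invariant under $\tau\colon(A,u,w)\mapsto(A^t,w^t,u^t)$. To get from the group $GL_{n+1}(\FF)$ to this, I would use Harish--Chandra descent: localise about a semisimple $s$ via Bernstein's localisation principle and Frobenius descent, reducing a neighbourhood of the $GL_n(\FF)$-orbit of $s$ to the analogous problem for the centraliser of $s$; running an induction on $n$, the one genuinely new case is $s$ central, for which the centraliser problem is exactly the linear statement on a neighbourhood of $0$, hence everywhere by homogeneity under dilations.

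Next I would run Harish--Chandra descent a second time, inside $W$, with respect to the semisimple part of the $\mathfrak{gl}_n$-coordinate. The centraliser of a non-central semisimple $A_s$ is a product of general linear groups over field extensions of $\FF$ of smaller total rank, and $u,w$ split compatibly, so by induction the statement holds there; thus the problem concentrates on the locus where $A$ is nilpotent. A further application of Bernstein's principle reduces everything to controlling, stratum by stratum, the $GL_n(\FF)$-invariant distributions supported over a fixed nilpotent orbit $\mathcal O\subseteq\mathfrak{gl}_n(\FF)$.

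The analytic heart is then to show that no nonzero distribution $\xi$ on a neighbourhood of the nilpotent locus is simultaneously $GL_n(\FF)$-invariant and $\tau$-anti-invariant. Following Aizenbud--Gourevitch--Rallis--Schiffmann, I would take the Fourier transform of $\xi$ with respect to the nondegenerate bilinear trace form $\langle A,B\rangle=\operatorname{tr}(AB)$ on $\mathfrak{gl}_n$ (nondegenerate over any field) together with the canonical pairing of $\FF^n$ with $(\FF^n)^*$, and use the two commuting $\FF^\times$-actions on $W$: the hyperbolic one $(A,u,w)\mapsto(A,tu,t^{-1}w)$ and the overall dilation. The homogeneity lemma forces $\operatorname{supp}(\xi)$ and $\operatorname{supp}(\widehat\xi)$ into a small ``doubly singular'' subvariety, and the key geometric lemma --- that the relevant set of triples $(A,u,w)$ with $A$ nilpotent and $u,w$ in the appropriate kernels is isotropic for the symplectic form built from these pairings --- produces a contradiction; an outer induction on $n$ clears the residual strata.

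The main obstacle lies in the two descent steps. A local field of characteristic $2$, e.g. $\FF_q((t))$, is typically \emph{not} perfect, so the Jordan decomposition is delicate: a ``semisimple'' element need not be diagonalisable over $\overline\FF$ and the extensions occurring in centralisers can be inseparable, so the descent must be reorganised around minimal polynomials and separable-versus-radicial decompositions rather than a naive semisimple/unipotent splitting. Similarly, the grading of a nilpotent orbit by a Jacobson--Morozov $\mathfrak{sl}_2$-triple, used in characteristic $0$ to read off the homogeneity degrees in the last step, is not available in characteristic $2$; I would replace it by an associated cocharacter in the sense of Kempf--Rousseau (Bala--Carter--Pommerening theory), which still yields the needed $\FF^\times$-grading. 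One must also check that the soft inputs --- Bernstein's localisation principle, Frobenius descent, the homogeneity lemma on $V\oplus V^*$, Schwartz distributions on $\ell$-spaces --- survive verbatim in characteristic $2$; they should, since they use only the nondegenerate \emph{bilinear} trace form and never a quadratic form: the associated quadratic form $A\mapsto\operatorname{tr}(A^2)=(\operatorname{tr}A)^2$ is a square and so identically singular in characteristic $2$, but it is not needed.
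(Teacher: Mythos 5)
Your overall architecture --- linearisation to the equivariant problem on $\mathfrak{gl}(V)\times V\times V^*$, Harish--Chandra descent reorganised around characteristic polynomials and separability rather than a naive Jordan decomposition, then a Fourier/homogeneity endgame --- is the same as the paper's, and your diagnosis of the imperfect-field issue in the descent is exactly what Section 2 (Theorem \ref{inseperable}) carries out. However, there are two genuine gaps, located at precisely the two points where characteristic $2$ actually bites. The first is your assertion that the homogeneity lemma on $V\oplus V^*$ ``survives verbatim'' because only the nondegenerate bilinear pairing, and never a quadratic form, is used. The lemma in question (Rallis--Schiffmann: a distribution supported, together with its Fourier transform, on $\{\langle v,\phi\rangle=0\}$ is abs-homogeneous of degree $\dim V$) is proved via the Weil representation of a cover of $SL_2$, and over a field of characteristic $2$ that theory must be rebuilt on Weil's pseudo-symplectic group, where the extra datum of a quadratic form refining the bilinear form is essential: in characteristic $2$ a quadratic form is not recoverable from its polarisation, and $SL_2(\FF)$ embeds not into $Sp$ in the usual way but into $PSp$ via an explicit choice of quadratic forms. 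The statement remains true, but it requires the construction of Appendix \ref{weilrep}; ``verbatim'' is a claim, not a proof.

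The second, more serious gap is that your ``key geometric lemma'' step presupposes that the support of the distribution lies in the set of triples with $\langle A^k v,\phi\rangle=0$ for all $k$. Establishing that inclusion is the main new content of the paper (Theorem \ref{k=1} and Corollary \ref{all k}): the characteristic-$\neq 2$ argument extracts $\langle A^k v,\phi\rangle$ from coefficients of the characteristic polynomial of $A+\mu v\otimes\phi$ in a way that uses the nonvanishing of integers such as $n$ and $\binom{n}{2}$, and it genuinely fails modulo $2$. The paper replaces it by a case analysis on $n\bmod 4$, exploiting the prior constraint that the characteristic polynomial is a power of a purely inseparable irreducible (so, for instance, a polynomial in $x^4$ when $4\mid n$), together with a separate Frobenius-descent argument for $n=2$, and then bootstraps from $k=0,1$ to all $k$ via the automorphisms $(A,v,\phi)\mapsto(A,g(A)v,g(A^*)\phi)$. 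Your proposal neither flags this failure nor supplies a substitute, so as written the argument does not go through. (Relatedly, the residual locus after descent is the purely inseparable one, not the nilpotent one: over an imperfect field the radicial part of $A$ cannot be split off by a further descent, since its ``centraliser'' is not smaller.)
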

For non-archimedean fields of characteristic zero it is proven in \cite{AGRS}, for archimedean fields in \cite{SZ} and \cite{AG-R},and for fields of odd characteristic in \cite{M}. In this paper we will give an adaptation of the proof in \cite{M} to characteristic 2.

It is shown in \cite[section 1]{AGRS} that Theorem \ref{goal} has the following corollary, already known by different methods (see \cite{AGS}).
\begin{theorem}
Let $\pi$ be an irreducible smooth representation of $GL_{n+1}$, and let $\rho$ be an irreducible smooth representation of $GL_n$. Then
$$\dim \operatorname{Hom}_{GL_n}(\pi, \rho)\leq 1$$
\end{theorem}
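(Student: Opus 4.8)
\section{Outline of the proof}

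The displayed corollary follows from Theorem~\ref{goal} by the Gelfand--Kazhdan method, exactly as in \cite[\S 1]{AGRS} and \cite{AGS}, so I focus on Theorem~\ref{goal}, following the scheme of \cite{M}. The plan begins with reductions that are valid in any characteristic. The Gelfand--Kazhdan criterion has already reduced the representation-theoretic statement to one about invariant distributions; next one passes from the group to its Lie algebra by Harish-Chandra descent. (The Cayley transform is unavailable here, since $1+x=1-x$ in characteristic $2$; one argues instead by induction, descending near each semisimple conjugacy class to the relevant centraliser and linearising the unipotent part by the everywhere-defined polynomial map $g\mapsto g-1$.) This reduces Theorem~\ref{goal} to: every $GL_n(\FF)$-invariant distribution on $\mathfrak{gl}_{n+1}(\FF)$ is invariant under $x\mapsto x^{t}$. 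Decomposing
\[
\mathfrak{gl}_{n+1}(\FF)\;\cong\;\mathfrak{gl}_n(\FF)\oplus\FF^{n}\oplus(\FF^{n})^{*}\oplus\FF
\]
as $GL_n(\FF)$-modules and discarding the inert summand $\FF$, it remains to prove, by induction on $n$: \emph{every $GL_n(\FF)$-invariant distribution $\xi$ on $W:=\mathfrak{gl}_n(\FF)\times\FF^{n}\times(\FF^{n})^{*}$ is invariant under the involution $\tau(A,v,\varphi)=(A^{t},\varphi^{t},v^{t})$; equivalently, no nonzero such $\xi$ satisfies $\tau^{*}\xi=-\xi$.}

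Fix a $GL_n(\FF)$-invariant $\xi$ with $\tau^{*}\xi=-\xi$. First I would run Harish-Chandra descent on the $\mathfrak{gl}_n$-coordinate: stratifying $W$ by the semisimple part $A_s$ of $A$ and applying Bernstein's localisation principle together with a Luna slice at a point where $A_s$ is non-scalar, the local problem there takes the same shape for $Z_{GL_n}(A_s)\cong\prod_i GL_{m_i}(\FF_i)$ acting on $\prod_i\bigl(\mathfrak{gl}_{m_i}(\FF_i)\times\FF_i^{m_i}\times(\FF_i^{m_i})^{*}\bigr)$, of strictly smaller total size, hence vanishes by the inductive hypothesis. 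Thus $\xi$ is supported on the conical set $L:=\{(A,v,\varphi): A-\lambda I\in\mathcal N \text{ for some }\lambda\in\FF\}$. Next, Fourier-transform along $W$ using the $GL_n(\FF)$-invariant, $\tau$-invariant, nondegenerate pairing $\langle(A,v,\varphi),(A',v',\varphi')\rangle=\operatorname{tr}(AA')+\varphi(v')+\varphi'(v)$ (this is nondegenerate in every characteristic, so the Fourier transform is available over a field of characteristic $2$). As $\widehat\xi$ is again $GL_n(\FF)$-invariant and satisfies $\tau^{*}\widehat\xi=-\widehat\xi$, the same descent shows $\widehat\xi$ is supported on $L$ as well.

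With $\xi$ and $\widehat\xi$ both supported on $L$, I would invoke the homogeneity argument: for a nilpotent $A$ pick a cocharacter $\lambda$ of $GL_n$ with $\operatorname{Ad}\lambda(t)\cdot A=t^{2}A$ and combine the resulting grading with the dilation of $W$ and with the interchange $\xi\leftrightarrow\widehat\xi$ to constrain, near the $GL_n$-orbit of $(A,v,\varphi)$, the homogeneity degree of the push-forward of $\xi$ to a transversal slice; these constraints are satisfiable only on a short list of orbits, so elsewhere $\xi$ vanishes. On each surviving non-``principal'' orbit a further Luna slice descends the problem to $GL_m(\FF)$ with $m<n$, where the inductive hypothesis applies; the ``principal'' orbits are those where $v,Av,\dots,A^{n-1}v$ is a basis with $\varphi=0$, together with their mirror image under $\tau$, and there the fact that $\tau$ swaps the two such orbits --- combined with the incompatibility of their homogeneity degrees with the support of $\widehat\xi$ --- forces $\xi=0$. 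This closes the induction and proves Theorem~\ref{goal}.

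The step I expect to be the genuine obstacle is this homogeneity argument together with the concluding computation on the principal orbit. In \cite{M} these rest on the nondegenerate quadratic form $x\mapsto\operatorname{tr}(x^{2})$ on $\mathfrak{gl}_n$ and on the associated metaplectic/Weil-representation machinery on the slices; but in characteristic $2$ one has $\operatorname{tr}(x^{2})=(\operatorname{tr} x)^{2}$, which is degenerate, and quadratic forms behave quite differently (Arf invariants; the inseparability of $x\mapsto x^{2}$). I expect the work to go into re-deriving the homogeneity statement from structures that survive in characteristic $2$: the nondegenerate \emph{bilinear} trace pairing, the genuinely nondegenerate hyperbolic form $(v,\varphi)\mapsto\varphi(v)$ on $\FF^{n}\oplus(\FF^{n})^{*}$, the dilation and cocharacter actions (which are characteristic-free), and the characteristic-$2$ version of the metaplectic homogeneity statement. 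A secondary point requiring care is the separability of the orbit maps used for the Luna slices: for $GL_n$ acting by conjugation on $\mathfrak{gl}_n$, and on $W$, their differentials have the expected rank, so the slices exist in characteristic $2$ as well --- but this has to be checked rather than quoted.
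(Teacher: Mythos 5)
Your deduction of the multiplicity-one statement from Theorem \ref{goal} is exactly what the paper does (a one-line appeal to \cite[\S 1]{AGRS}), so what needs reviewing is your outline of the proof of Theorem \ref{goal}. The overall strategy is the right one, but two points do not survive characteristic $2$ in the form you state them.

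First, the support reduction. You claim that Harish-Chandra descent plus induction localizes $\xi$ to $L=\{(A,v,\varphi): A-\lambda I\ \text{nilpotent for some}\ \lambda\in\FF\}$. Over a local field of characteristic $2$ this is false: there are irreducible \emph{purely inseparable} polynomials of degree $2^k>1$ (for instance $x^2-t$ over $\mathbb{F}_2((t))$), and a matrix whose characteristic polynomial is a power of such a polynomial is not nilpotent after subtracting any scalar. Descent cannot remove these points, because such a matrix has no Jordan decomposition over $\FF$ --- there is no continuous invariant ``semisimple part'' map along which to run Frobenius descent, and these matrices degenerate to nilpotent ones as the inseparable eigenvalue tends to $0$, so they cannot be separated from the nilpotent locus by a closed invariant condition. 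What descent actually yields, and what the paper proves in Section 2, is only that the characteristic polynomial of $A$ is a power of a single irreducible \emph{purely inseparable} polynomial: the separable case is eliminated by descending along $A\mapsto (A^{2^k})_s$ to $\widetilde{GL}_{2^k m}(\mathbb{E})$ for a separable extension $\mathbb{E}$, but the inseparable locus survives and has to be carried through the rest of the argument. Your homogeneity step, which presumes $A$ nilpotent up to a scalar on the support, therefore does not apply as written.

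Second, your outline omits the step that is the actual content of the paper: proving that $\langle A^k v,\varphi\rangle=0$ for all $k\ge 0$ on the support of $\xi$ (Corollary \ref{all k}, replacing \cite[Proposition 4.6]{M}, whose proof is the first and more significant casualty of characteristic $2$). The paper gets $\langle v,\varphi\rangle=0$ from \cite{AG}, then extracts $\langle Av,\varphi\rangle=0$ by comparing coefficients of the characteristic polynomial of $A+\mu v\otimes\varphi$ --- which in characteristic $2$ forces a case analysis on $n\bmod 4$ and a separate explicit Frobenius-descent computation for $n=2$ --- and finally bootstraps to all $k$ via the automorphisms $(A,v,\varphi)\mapsto(A,g(A)v,g(A^*)\varphi)$ and Zariski density. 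None of this appears in your plan, and it is precisely the input your later homogeneity argument would need. On the positive side, you correctly flag the degeneration of $\operatorname{tr}(x^2)$ and the Weil-representation input as a genuine obstacle; the paper resolves that point in its appendix by replacing the metaplectic picture with Weil's pseudo-symplectic group $PSp$ and an explicit embedding of $SL_2(\FF)$ into it, which recovers the Rallis--Schiffmann abs-homogeneity statement you need on the slice.
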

Let $V$ be an $n$-dimensional vector space over $\FF$.
Let $\tG := GL(V)\rtimes \{\pm 1\}$ be the semidirect product with the respect to the action of $\{\pm 1\}$ on $GL(V)$ by $A\mapsto \left(A^t\right)^{-1}$. The group $\tG$ acts on $\mathfrak{gl}(V)$ by $(g,1).(A,v,\phi) = (gAg^{-1},gv,(g^*)^{-1}\phi)$, and $(g,-1).(A,v,\phi) = (\left(gAg^{-1}\right)^t, g\phi^t, (g^*)^{-1}v^t)$. Let $\chi$ be the character of $\tG$ defined by $(g,\delta)\mapsto \delta$.
It is shown in \cite{AG} (the same proof works verbatim) that Theorem \ref{goal} reduces to the following theorem:

\begin{theorem}\label{main}
Any $(\tG,\chi)$-equivariant distribution on $\mathfrak{gl}(V)\times V\times V^*$ is 0.
\end{theorem}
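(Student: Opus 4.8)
The plan is to prove Theorem~\ref{main} by induction on $n=\dim V$, following the strategy of \cite{AGRS} and of its odd-characteristic adaptation \cite{M}, while isolating the places where characteristic $2$ forces a change; the case $n=0$ is trivial. Throughout I will use the Fourier transform $\mathcal F$ on $X:=\mathfrak{gl}(V)\times V\times V^*$, defined via a non-trivial additive character of $\FF$ and the self-duality given by the trace form $(A,B)\mapsto\mathrm{tr}(AB)$ on $\mathfrak{gl}(V)$ together with the canonical pairing between $V$ and $V^*$. Over a local field of characteristic $2$ this transform and the attached self-dual measures behave as usual, and one checks exactly as in \cite{AGRS} that $\mathcal F$ carries $(\tG,\chi)$-equivariant distributions to $(\tG,\chi)$-equivariant distributions. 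Hence, given a putative non-zero equivariant $\xi$, I am free at any stage to replace it by $\mathcal F\xi$.

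The first block of steps is Harish-Chandra descent. I localize along the characteristic-polynomial map $X\to\mathfrak{gl}(V)\to\mathfrak{gl}(V)/\!/GL(V)$, which is $\tG$-invariant since transposition preserves characteristic polynomials, and invoke Bernstein's localization principle: it suffices to treat the equivariant distributions supported over each closed point of the base. If the characteristic polynomial there is not a power of a single irreducible polynomial over $\FF$, then $A$ preserves a non-trivial decomposition $V=V_1\oplus V_2$, and a slice/Frobenius-descent argument expresses the relevant distributions through equivariant distributions attached to $V_1$ and to $V_2$ separately, which vanish by induction. So I may assume the characteristic polynomial is a power of one irreducible; a further descent reduces to the split case, and the translation $A\mapsto A-\lambda\,\mathrm{Id}_V$ (which commutes with conjugation and transposition and fixes $V$ and $V^*$) reduces us to $\xi$ supported over the nilpotent locus. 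A second round of descent, along the $GL(V)$-action on $V$ and on $V^*$ (transitive on the punctured spaces), then cuts the support of $\xi$ down to the characteristic-$2$ analogue of the ``degenerate set'' of \cite{AGRS}: roughly, triples $(A,v,\phi)$ with $A$ nilpotent, $Av=0$, $A^*\phi=0$ and $\phi(v)=0$.

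Several of these steps need rechecking in characteristic $2$. The pairing $(v,\phi)\mapsto\phi(v)$ on $V\oplus V^*$ is a split quadratic form, which is harmless; but on $\mathfrak{gl}(V)$ the trace form, although still a non-degenerate bilinear form, has the very degenerate associated quadratic form $A\mapsto\mathrm{tr}(A^2)=(\mathrm{tr}\,A)^2$. So wherever \cite{AGRS} or \cite{M} appeals to the quadratic form of a symmetric matrix, or to a symmetric-versus-alternating dichotomy on $\mathfrak{gl}(V)$, I will have to reorganize the argument and analyse the degenerate set (and its image under $\mathcal F$) directly rather than through such forms.

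The main obstacle is the concluding nilpotent step. In \cite{AGRS}, and in its adaptation \cite{M}, one completes the nilpotent $A$ to an $\mathfrak{sl}_2$-triple $(e,h,f)$, uses the resulting $SL_2(\FF)$-action --- through an action on distributions on $\mathfrak{gl}(V)$ related to the Weil representation --- to pin down the homogeneity of $\xi$ along the $h$-grading, and plays this against $\mathcal F$, which interchanges the $e$-side and the $f$-side of $\mathfrak{gl}(V)$ under the trace pairing, until $\xi$ and $\mathcal F\xi$ are forced to be supported at a single point; since a distribution on a vector space supported at a point whose Fourier transform is also supported at a point must vanish, this would finish the proof. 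In characteristic $2$ Jacobson--Morozov fails --- already for a single Jordan block of size $2$ the relation $[h,e]=2e=0$ is vacuous --- and the Weil/Gauss-sum input degenerates because $x\mapsto x^2$ is the Frobenius. My remedy will be to use, in place of the triple, the cocharacter $\gamma\colon\mathbb G_m\to GL(V)$ coming from the Jordan grading of $A$, which exists over any field and satisfies $\mathrm{Ad}(\gamma(t))A=t^2A$: the induced weighted $\mathbb G_m$-action on $X$ forces $\xi$ to be homogeneous of a fixed degree, and the positive/zero/negative weight decomposition of $X$ takes over the role of the triple. Combining the weighted homogeneity of $\xi$ with that of $\mathcal F\xi$ (using that $\mathcal F$ swaps the positive- and negative-weight parts of $\mathfrak{gl}(V)$) should squeeze the supports of $\xi$ and $\mathcal F\xi$ into complementary coordinate subspaces, and the point-support argument then applies. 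Verifying that this $\mathbb G_m$-argument genuinely replaces Jacobson--Morozov here --- in particular that one does not covertly need the opposite nilpotent $f$ --- is the heart of the adaptation and the step I expect to be hardest in characteristic $2$.
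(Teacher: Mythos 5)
Your outline reproduces the general AGRS/\cite{M} strategy correctly, but it breaks down at precisely the points where characteristic $2$ actually bites, and these are the points this paper exists to fix. First, your reduction to the nilpotent locus via $A\mapsto A-\lambda\,\mathrm{Id}_V$ is not available here. After Harish--Chandra descent one only knows that the characteristic polynomial of $A$ is a power of a single irreducible polynomial, and in characteristic $2$ the irreducible factor may be purely inseparable and non-linear, i.e.\ of the form $x^{2^k}+\lambda$ with $\lambda\notin\FF^{2^k}$; there is then no $\lambda^{1/2^k}\in\FF$ to translate by, and no separable extension $\mathbb{E}/\FF$ along which to run the usual Galois/Frobenius descent (that descent is exactly what Theorem~\ref{inseperable} uses to dispose of the \emph{separable} non-linear factors, via the Jordan decomposition of $A^{2^k}$). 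The paper therefore does not reduce to the nilpotent locus at all: it stops at the purely inseparable locus and works there directly.

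Second, the step your proposal leaves entirely open is the vanishing of the invariants $\langle A^k v,\phi\rangle$ on the support (Corollary~\ref{all k}, replacing \cite[Proposition 4.6]{M}), which is the first place the odd-characteristic argument fails. The paper's substitute is concrete: one applies the shear $\rho_\mu:(A,v,\phi)\mapsto(A+\mu v\otimes\phi,v,\phi)$ and exploits the constraint that the characteristic polynomial of $A+\mu v\otimes\phi$ must remain a power of a purely inseparable irreducible, extracting $\langle Av,\phi\rangle=0$ from its coefficients $c_1,c_2,c_n$ by a case analysis on $n\bmod 4$ (with a separate Frobenius-descent argument for $n=2$); then the identity $\langle g(A)v,g(A^*)\phi\rangle=\langle g(A)^2v,\phi\rangle$ together with the fact that every exponent is $2k$ or $2k+1$ upgrades the two cases $k=0,1$ to all $k$. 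Nothing in your sketch produces this. Finally, for the Rallis--Schiffman homogeneity input you correctly note that the Weil representation degenerates, but your proposed remedy --- the cocharacter $\gamma$ with $\mathrm{Ad}(\gamma(t))A=t^2A$ --- cannot replace it: the relevant $SL_2$ acts on $V\oplus V^*$ through the oscillator picture attached to the pairing $\langle v,\phi\rangle$, not by conjugation on $\mathfrak{gl}(V)$, so the Jordan grading of $A$ is irrelevant to that step. The paper's Appendix~\ref{weilrep} supplies the actual fix: Weil's pseudo-symplectic group, into which $SL_2(\FF)$ embeds honestly (quadratic forms being recorded separately from their polarizations), giving the abs-homogeneity of degree $\dim V$ in Theorem~\ref{RS thm}. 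As written, your argument has genuine gaps at all three of these points.
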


We will prove this theorem by induction on the dimension of $V$, and so we will assume this theorem for all smaller $n$. Throughout the paper, let $\xi$ be a $(\tG, \chi)$-equivariant distribution on $\mathfrak{gl}(V)\times V\times V^*$.

There are two points in the proof in \cite{M} in which the assumption $\mathrm{char}(\FF)\neq 2$ was made use of. The first and more significant one is the proof of Proposition 4.6. The main goal of this paper is to prove this theorem over fields of characteristic 2 (Corollary \ref{all k}). In section 2 we use the techinique of Harish-chandra descent to a further extent than was used in \cite{M}, to get a stronger restriction on the support of $\xi$, which will be used in the proof of Corollary \ref{all k} in section 3.
The second usage of the assumption $\mathrm{char}(\FF)\neq 2$ in \cite{M} was the usage of a theorem by Rallis and Schiffman (\cite[Theorem 2.9]{M}), which is relied on the theory of the Weil representation, a theory which is a bit different when working over a field of characteristic 2. In Appendix \ref{weilrep}, we prove a version of the theorem over a field of characteristic 2, which is sufficient for the proof as given in \cite{M}.
\subsection{Acknowledgements}
I would like to deeply thank my advisor, Dmitry Gourevitch, for helping me with my work on this paper, for exposing me to this fascinating area of mathematics, and for the guidance and support along the way. I would also like to thank him for the exceptional willingness to help.\\
I would also like to thank Guy Henniart for fruitful discussions he had with my advisor laying the foundations for this project, and for his continued interest in my research along the way.\\
I am also grateful to Shamgar Gurevich, for the help he gave me when writing this paper.\\
D.M. was partially supported by ERC StG grant 637912. 
\section{Reduction to the purely inseparable locus}
\begin{notation}
Use $\Delta:\mathfrak{gl}(V)\times A\times A^*\to \FF[x]$ to denote the map which sends $(A,v,\phi)$ to the characteristic polynomial of $A$.
\end{notation}
\begin{theorem}
For any point $(A,v,\phi)$ in the support of $\xi$, the characteristic polynomial of $A$ is a power of an irreducible polynomial.
\end{theorem}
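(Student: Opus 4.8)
The plan is to run a Harish--Chandra descent along the invariant map $\Delta$, exactly as in \cite{AGRS} and \cite{M}; no feature of characteristic $2$ enters in this step. Write $X:=\mathfrak{gl}(V)\times V\times V^{*}$. Since $\Delta$ is invariant under conjugation and under transposition it is $\tG$-invariant, so it is enough to prove the following local statement: whenever $x_0=(A_0,v_0,\phi_0)\in X$ is such that $\Delta(x_0)=p$ is \emph{not} a power of an irreducible polynomial, $\xi$ vanishes on some $\tG$-invariant open neighbourhood of $x_0$; granting this for all such $x_0$, the support of $\xi$ avoids every point where $\Delta$ takes a value that is not a prime power, which is the assertion. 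So fix such an $x_0$ and write $p=p_1p_2$ with $p_1,p_2$ coprime of degrees $d_1,d_2\ge 1$, $d_1+d_2=n$. Because $p_1,p_2$ are coprime, the multiplication map on monic polynomials is a local isomorphism near $(p_1,p_2)$ (its differential there is invertible, $\operatorname{Res}(p_1,p_2)\neq 0$), so there is a neighbourhood $W$ of $p$ in which every polynomial factors uniquely into two coprime monic factors of degrees $d_1,d_2$ lying near $p_1,p_2$; put $U:=\Delta^{-1}(W)$, a $\tG$-invariant open neighbourhood of $x_0$.

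On $U$, writing the characteristic polynomial of $A$ as $f_1f_2$ in this way, one gets the $A$-stable decomposition $V=\ker f_1(A)\oplus\ker f_2(A)$ into subspaces of dimensions $d_1,d_2$, hence a $\tG$-equivariant map $\rho$ from $U$ to the space $B$ of ordered decompositions $V=W_1\oplus W_2$ with $\dim W_i=d_i$. The group $GL(V)$ acts transitively on $B$, so $B\cong\tG/H$; the point to pin down is that the transposition element $(1,-1)\in\tG$ can be normalized to fix a base point of $B$. Indeed transposition replaces $A$ by $A^{t}$, and since $f_i(A)$ is zero on $\ker f_i(A)$ and invertible on $\ker f_{3-i}(A)$ one has $\ker f_i(A^{t})=\bigl(\operatorname{im}f_i(A)\bigr)^{\perp}=\bigl(\ker f_{3-i}(A)\bigr)^{\perp}$ with respect to the standard bilinear form (which is non-degenerate in any characteristic); so the induced map on $B$ is $(W_1,W_2)\mapsto(W_2^{\perp},W_1^{\perp})$, and taking $W_1,W_2$ to be spanned by complementary blocks of the standard basis makes $(W_1,W_2)$ orthogonal, hence a fixed point. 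Therefore $H=(GL(W_1)\times GL(W_2))\rtimes\langle(1,-1)\rangle$ with $(1,-1)$ acting on $GL(W_i)$ by $g\mapsto(g^{t})^{-1}$; equivalently $H\cong\widetilde{GL(W_1)}\times_{\{\pm 1\}}\widetilde{GL(W_2)}$, the fibre product identifying the two transpositions, where $\widetilde{GL(W_i)}:=GL(W_i)\rtimes\{\pm1\}$.

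Next I would apply Frobenius descent along $\rho$: $(\tG,\chi)$-equivariant distributions on $U$ correspond to $(H,\chi)$-equivariant distributions on the fibre $U_0=\rho^{-1}(W_1\oplus W_2)$, which is an open $H$-invariant subset of $X_1\times X_2$, where $X_i:=\mathfrak{gl}(W_i)\times W_i\times W_i^{*}$, $GL(W_i)$ acts standardly on $X_i$, and $(1,-1)$ acts as the involution of Theorem \ref{main} on each factor simultaneously. It remains to show that $U_0$ carries no nonzero $(H,\chi)$-equivariant distribution. The two partial transpositions $\theta^{(1)},\theta^{(2)}$ (acting only on $X_1$, resp.\ only on $X_2$) commute with $GL(W_1)\times GL(W_2)$ and with each other, and their product is the transposition element, so the space of $(GL(W_1)\times GL(W_2))$-invariant distributions on $U_0$ carries the two commuting involutions $\theta^{(1)},\theta^{(2)}$, and the $(H,\chi)$-equivariant ones are exactly the sum of the two joint eigenspaces on which one of $\theta^{(i)}$ acts by $+1$ and the other by $-1$. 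On the first of these a distribution is in particular equivariant for $\widetilde{GL(W_2)}$ with the character $(g,\delta)\mapsto\delta$, and $\widetilde{GL(W_2)}$ acts on $U_0$ only through the $X_2$-factor; applying Bernstein's localization principle to the $\widetilde{GL(W_2)}$-invariant projection $U_0\to X_1$ reduces its vanishing to the vanishing of such distributions on each fibre, which is a $\widetilde{GL(W_2)}$-invariant open subset of $X_2$, and this holds by the induction hypothesis (the proof of Theorem \ref{main} for $\dim W_2=d_2<n$ being about supports, it applies verbatim on any such invariant open subset). The other eigenspace vanishes symmetrically, so $\xi|_U=0$ and the descent is complete.

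The hard part is the middle paragraph: getting $H$ right — in particular checking that transposition can be normalized to stabilize the decomposition, so that $H$ is the twisted fibre product $\widetilde{GL(W_1)}\times_{\{\pm1\}}\widetilde{GL(W_2)}$ rather than an honest product — together with squeezing the two-factor vanishing out of the one-factor induction hypothesis via localization. All of this is insensitive to the characteristic; the genuine characteristic-$2$ obstructions surface only afterwards, in Corollary \ref{all k} and Appendix \ref{weilrep}.
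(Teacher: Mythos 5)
Your proposal follows the same route as the paper: use the coprime factorization $p=p_1p_2$ to produce the $A$-stable decomposition $V=\ker p_1(A)\oplus\ker p_2(A)$, map $\tG$-equivariantly to the (transitive) space of decompositions, apply Frobenius descent, and finish by the induction hypothesis on each summand. One point where you are actually \emph{more} careful than the paper: the stabilizer of a decomposition in $\tG$ is indeed the fibered product $(GL(W_1)\times GL(W_2))\rtimes\{\pm1\}$ and not the full product of the two $\tG(W_i)$'s (there is only one transposition available inside $\tG$), so some argument is needed to get from the single simultaneous transposition to the one-factor induction hypothesis; your decomposition of the space of $GL(W_1)\times GL(W_2)$-invariant distributions into joint eigenspaces of the two commuting partial transpositions $\theta^{(1)},\theta^{(2)}$, and the observation that $\chi$-equivariance forces the $(+,-)$ and $(-,+)$ components, is the standard and correct way to do this. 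Your computation of the fixed point of the involution $(W_1,W_2)\mapsto(W_2^{\perp},W_1^{\perp})$ and of $\ker p_i(A^t)=(\ker p_{3-i}(A))^{\perp}$ is also correct.

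The one step I would not accept as written is the very last one. Because you replace the initial application of the localization principle along $\Delta$ (which in the paper confines everything to a single fiber $\Delta^{-1}(f)$, making the fiber of the decomposition map a \emph{closed} subset of $X_1\times X_2$) by a by-hand open-neighbourhood argument, your final reduction lands on $\tG(W_2)$-invariant \emph{open} subsets $\Omega$ of $\mathfrak{gl}(W_2)\times W_2\times W_2^{*}$. The statement ``$\Sc^*(X_2)^{(\tG(W_2),\chi)}=0$'' does not formally imply ``$\Sc^*(\Omega)^{(\tG(W_2),\chi)}=0$'': restriction $\Sc^*(X_2)\to\Sc^*(\Omega)$ is surjective but need not remain surjective on equivariants, so an equivariant distribution on $\Omega$ need not extend equivariantly. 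For closed invariant subsets the implication does hold (restriction of test functions is surjective, so $\Sc^*(Z)$ embeds equivariantly in $\Sc^*(X_2)$), which is exactly why the paper arranges for the fiber to be closed. Your parenthetical ``the proof is about supports, so it applies verbatim on invariant open subsets'' is morally right but is a claim about the proof of Theorem \ref{main} rather than a consequence of its statement, and since the induction is intertwined (the present theorem is itself an ingredient of Theorem \ref{main} in each dimension) this should not be left implicit. The clean repair is simply to localize along $\Delta$ first, as the paper does; after that your argument goes through unchanged.
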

\begin{proof}
Assume that a polynomial $f$ of degree $n$ has two coprime components $f = f_1f_2$. By the localization principle (see \cite[Theorem 2.4]{M}), it is enough for us to show that for any such polynomial $f$, the fiber of $\Delta$ above $f$ has no non-zero $(\tG,\chi)$-equivariant distributions. Let $\zeta$ be such a distribution on $\Delta^{-1}(f)$. Let $d_1 = \deg f_1, d_2 = \deg f_2$. Denote by $\Lambda$ the space of all pairs of subspaces $V_1,V_2$ of $V$ such that $\dim(V_1) = d_1, \dim(V_2) = d_2, V = V_1\oplus V_2$. $\Lambda$ has a natural action of $G$ on it, which extends to an action of $\tG$ by the involution $(V_1,V_2)\mapsto (V_2^\perp, V_1^\perp)$with respect to the quadratic form $v\mapsto v^t v$.
These two actions are (both) transitive.
There is a $\tG$-equivariant map $\Delta^{-1}(f)\to \Lambda$, given by taking the (unique) pair of $A$-invariant subspaces of $V$ on which $A$ acts with characteristic polynomials $f_1$ and $f_2$. The fiber of this map above $(V_1,V_2)$ is a closed subspace of $(\mathfrak{gl}(V_1)\times V_1 \times V_1^*)\times (\mathfrak{gl}(V_2)\times V_2 \times V_2^*)$, and the stabilizer of this point is equal to $\tG(V_1)\times \tG(V_2)$. By the localization principle (see \cite[Theorem 2.4]{M}) and induction hypothesis, there are no non-zero $(\tG(V_1)\times \tG(V_2), \chi)$-equivariant distributions on $(\mathfrak{gl}(V_1)\times V_1 \times V_1^*)\times (\mathfrak{gl}(V_2)\times V_2 \times V_2^*)$, and so it follows from Frobenius descent (see \cite[Theorem 2.7]{M}) that $\zeta = 0$ too.
\end{proof}
\begin{theorem}\label{inseperable}
For any point $(A,v,\phi)$ in the support of $\xi$, the irreducible factor in the characteristic polynomial of $A$ is purely inseparable.
\end{theorem}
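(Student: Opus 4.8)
The plan is to combine the localization principle with one further round of Harish--Chandra descent, carried out at a \emph{separable} semisimple element produced by raising to a suitable $2$-power. Fix an irreducible polynomial $p$ whose degree divides $n$, set $k=n/\deg p$, and write $p(x)=h(x^q)$ with $q=2^m$ taken as large as possible; then $h$ is irreducible and separable, and $p$ fails to be purely inseparable exactly when $\deg h\ge 2$. By the localization principle (\cite[Theorem 2.4]{M}) together with the previous theorem (that the characteristic polynomial is always a power of an irreducible), it is enough to prove that for every such $p$ with $\deg h\ge 2$ the fiber $\Delta^{-1}(p^k)$ admits no nonzero $(\tG,\chi)$-equivariant distribution.

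On $\Delta^{-1}(p^k)$ consider the map $(A,v,\phi)\mapsto (A^q)_s$, the semisimple part of $A^q$. Since we are in characteristic $2$ and $q$ is a $2$-power, $(A_s+A_n)^q=A_s^q+A_n^q$, so $(A^q)_s=A_s^q$; moreover $A^q$ has characteristic polynomial $h^{qk}$ and $A_s^q$ has minimal polynomial $h$, which is \emph{separable}. Thus $h(A^q)$ is nilpotent and the semisimple part $(A^q)_s$ is defined over $\FF$ as a polynomial in $A^q$ (extracted by Newton's iteration for $h$), hence a polynomial in $A$; the map is therefore algebraic and $\tG$-equivariant, and its image is the single $\tG$-orbit $\mathcal{O}(Y_0)$ of a fixed separable semisimple $Y_0$ with characteristic polynomial $h^{qk}$ — all such elements are $G$-conjugate, since each makes $V$ into a free module of rank $qk$ over the field $E_0:=\FF[y]/(h(y))$. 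So $\Delta^{-1}(p^k)\to\mathcal{O}(Y_0)\cong\tG/Z_{\tG}(Y_0)$ is a $\tG$-equivariant fibration, and by Frobenius descent (\cite[Theorem 2.7]{M}) it suffices to show that the fiber over $Y_0$ carries no nonzero $Z_{\tG}(Y_0)$-equivariant distribution.

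Using the $E_0=\FF[Y_0]$-module structure, identify $V$ with a free $E_0$-module $V'$ of rank $qk=n/\deg h$. Since $E_0/\FF$ is separable, the trace form identifies $V^*$ with the $E_0$-dual of $V'$, and one checks that $Z_{\tG}(Y_0)$ is isomorphic to the analogous group $\tG_{E_0}(V')=GL_{E_0}(V')\rtimes\{\pm1\}$ for a suitable $E_0$-transpose (attached to a nondegenerate symmetric $E_0$-bilinear form on $V'$ obtained by descending the standard form along the trace; its precise isomorphism class will not matter), with $\chi$ restricting to the corresponding character. Under this identification the fiber over $Y_0$ becomes the closed $\tG_{E_0}$-invariant subset $\{(A,v,\phi): A^q-Y_0 \text{ is nilpotent}\}$ of $\mathfrak{gl}_{E_0}(V')\times V'\times (V')^{*}$, hence in particular a closed subset of that space. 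Since $\deg h\ge 2$ we have $qk=n/\deg h<n$, so by the induction hypothesis (Theorem \ref{main} over the local field $E_0$, which again has characteristic $2$, in dimension $qk$) there is no nonzero $(\tG_{E_0},\chi)$-equivariant distribution on $\mathfrak{gl}_{E_0}(V')\times V'\times(V')^{*}$ at all; by the localization principle the same holds on the closed subset, hence on the fiber. Frobenius descent now gives that $\xi$ vanishes on $\Delta^{-1}(p^k)$, completing the proof.

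The main obstacle I anticipate is verifying that the Harish--Chandra descent apparatus — smoothness of the centralizer $Z_G(Y_0)$, submersivity of the orbit map $G\to\mathcal{O}(Y_0)$, the identification $Z_{\tG}(Y_0)\cong\tG_{E_0}(V')$, and the compatibility of $\chi$ — goes through over a field of characteristic $2$. All of this rests on $Y_0$ being separable over $\FF$: the scheme-theoretic centralizer of an inseparable semisimple element (such as $A_s$ itself when $q\ge 2$) need not be smooth and the orbit map need not be submersive, which is precisely why the descent is performed at $A_s^q$, and why it yields a genuine dimension drop — hence an improvement of the support bound — only when $\deg h\ge 2$, i.e.\ only outside the purely inseparable locus.
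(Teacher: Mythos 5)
Your proposal is correct and follows essentially the same route as the paper: localize to a fiber $\Delta^{-1}(p^k)$, map $A$ to the semisimple part of $A^{q}$ (which exists and is a polynomial in $A$ because the characteristic polynomial of $A^q$ is a power of the separable $h$), and apply Frobenius descent to reduce, via the induction hypothesis, to the analogous problem for $GL_{qk}$ over the extension field $E_0=\FF[x]/(h)$, with the genuine dimension drop coming from $\deg h\ge 2$. Your write-up in fact supplies more detail than the paper on why the descent at the separable element $A_s^q$ (rather than at $A_s$) is legitimate.
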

\begin{proof}
By the localization principle (see \cite[Theorem 2.4]{M}) it is enough to consider a $(\tG,\chi)$-equivariant distribution $\zeta$ on $F\times V\times V^*$, where $F$ is the fiber of $\Delta$ over some $f^m$, $f$ being irreducible and not purely separable, and show that $\zeta=0$.
We have $f(x) = g(x^{2^k})$ for $g(x)$ irreducible and separable. By assumption, $\deg g > 1$. For any $A\in F$, the characteristic polynomial of $B:=A^{2^k}$ is equal to $g(x)^{2^k m}$, as it is the polynomial whose roots are $2^k$ powers of the roots of $g(x^{2^k})^m$. All of its irreducible factors are separable, and so $B$ has a well defined Jordan decomposition $B = B_s + B_n$. Moreover, the map $h:A\mapsto B_s$ is continuous on $F$. Note that since $B_s$ is expressable as a polynomial in $B$ (and thus as a polynomial in $A$), it commutes with $A$.
Use Frobenius descent (see \cite[Theorem 2.7]{M}) with respect to $h$ - the stabilizer of a point is isomorphic to $\widetilde{GL}_{2^k m}(\mathbb{E})$, where $\mathbb{E}:= \FF[x] / g(x)$. The fiber of $h$ above a point is isomorphic to a closed subspace of $\mathfrak{gl}_{2^k m}(\mathbb{E})$, and so by induction hypothesis applied to the Frobenius descent of $\zeta$, we get that $\zeta=0$.
\end{proof}

\section{Vanishing of linear invariants}
The following proposition is proved in \cite[Lemma 7.2]{AG} over a field of characteristic $0$, and the proof there applies verbatim over arbitrary characteristic.
\begin{proposition}\label{k=0}
For any point $(A,v,\phi)$ in the support of $\xi$, we have $<v,\phi> = 0$.
\end{proposition}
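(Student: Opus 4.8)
The plan is to run Harish--Chandra descent along the $\tG$-invariant function $q(A,v,\phi):=\langle v,\phi\rangle$, exactly as in \cite[Lemma 7.2]{AG}. First one checks that $q$ is genuinely $(\tG,\mathbf 1)$-equivariant: it is $GL(V)$-invariant since $\langle gv,(g^*)^{-1}\phi\rangle=\langle v,\phi\rangle$, and a direct computation gives $\langle g\phi^t,(g^*)^{-1}v^t\rangle=\langle v,\phi\rangle$ as well, so $q$ is also unchanged by the transposition-twisted elements. Consequently each level set $q^{-1}(\lambda)$ is $\tG$-stable, and by the localization principle (\cite[Theorem 2.4]{M}) applied to $q\colon\mathfrak{gl}(V)\times V\times V^*\to\FF$ it suffices to prove that for every $\lambda\in\FF^\times$ there is no nonzero $(\tG,\chi)$-equivariant distribution on $q^{-1}(\lambda)$; this forces $\operatorname{supp}\xi\subseteq q^{-1}(0)$, which is precisely the asserted vanishing of $\langle v,\phi\rangle$ on the support.

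Fix $\lambda\in\FF^\times$. As a $\tG$-space, $q^{-1}(\lambda)$ is the product $\mathfrak{gl}(V)\times Y_\lambda$, where $Y_\lambda:=\{(v,\phi):\langle v,\phi\rangle=\lambda\}$ carries the evident $\tG$-action and $\mathfrak{gl}(V)$ carries the transposition-twisted adjoint action. Already $GL(V)$ acts transitively on $Y_\lambda$: a pair $(v,\phi)\in Y_\lambda$ has $v\notin\ker\phi$, hence $V=\FF v\oplus\ker\phi$, and any linear isomorphism matching two such decompositions together with the chosen lines lies in $GL(V)$. I would take the base point $(v_0,\phi_0)=(e_1,\lambda e_1^*)$ in the standard basis and set $W:=\ker\phi_0=\langle e_2,\dots,e_n\rangle$, so $\dim W=n-1$. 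A short computation shows that the $GL(V)$-stabilizer of $(v_0,\phi_0)$ is the block-diagonal copy $\{\mathrm{id}_{\FF v_0}\oplus d: d\in GL(W)\}\cong GL(W)$, and that $(\lambda^{-1}\mathrm{id},-1)$ stabilizes $(v_0,\phi_0)$; together with $GL(W)$ the latter generates the $\tG$-stabilizer, which is therefore a copy of $\tG(W)$ on which $\chi$ restricts to $\chi$. By Frobenius descent (\cite[Theorem 2.7]{M}) applied to the projection $q^{-1}(\lambda)\to Y_\lambda$, it remains to rule out nonzero $(\tG(W),\chi)$-equivariant distributions on the fibre $\mathfrak{gl}(V)$ over $(v_0,\phi_0)$.

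For this one uses the block decomposition of $\mathfrak{gl}(V)$ relative to $V=\FF v_0\oplus W$: conjugation by $\mathrm{id}\oplus d$ fixes the $(1,1)$-entry and acts on the two off-diagonal blocks and the $W$-block exactly as $GL(W)$ acts on $W^*$, $W$ and $\mathfrak{gl}(W)$, while $(\lambda^{-1}\mathrm{id},-1)$ acts by $A\mapsto A^t$, which again fixes the $(1,1)$-entry, swaps the two off-diagonal blocks, and restricts to the standard involution on the $W$-block. Hence the fibre is, $\tG(W)$-equivariantly, $\bigl(\mathfrak{gl}(W)\times W\times W^*\bigr)\times\FF$ with $\tG(W)$ acting in the usual way on the first factor and trivially on the $\FF$. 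By the induction hypothesis (Theorem \ref{main} in dimension $n-1$) there is no nonzero $(\tG(W),\chi)$-equivariant distribution on $\mathfrak{gl}(W)\times W\times W^*$, and localizing once more over the projection to $\FF$ shows there is none on the product. Thus the descended distribution vanishes, hence so does any $(\tG,\chi)$-equivariant distribution on $q^{-1}(\lambda)$, and we conclude $\operatorname{supp}\xi\subseteq q^{-1}(0)$.

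I expect the only real work to be the bookkeeping in these last steps --- pinning down the $\tG$-stabilizer of the base point and checking that the induced $\tG(W)$-action on the fibre is the standard one with the $\FF$-summand untouched. Crucially, none of this divides by $2$: the bilinear form $(v,w)\mapsto v^tw$ that defines the transpose is the identity matrix and remains nondegenerate in characteristic $2$ (it is only the associated quadratic form $v\mapsto v^tv$ that degenerates there), which is why the argument of \cite[Lemma 7.2]{AG} carries over verbatim, and the genuine characteristic-$2$ obstacles appear only later in the paper.
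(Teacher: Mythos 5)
Your proof is correct and is essentially the argument of \cite[Lemma 7.2]{AG} that the paper invokes without repeating: localize along $\langle v,\phi\rangle$, use transitivity of $\tG$ on each nonzero level set of $(v,\phi)$, and apply Frobenius descent to a fibre that is $\tG(W)$-equivariantly $\bigl(\mathfrak{gl}(W)\times W\times W^*\bigr)\times\FF$, which is killed by the induction hypothesis. The only bookkeeping caveat is that the stabilizer of the base point identifies with $\tG(W)$, and its action on the fibre with the standard one, only up to an automorphism of $\tG(W)$ covering the identity on $\{\pm 1\}$; this is harmless because $\chi$ factors through $\{\pm 1\}$.
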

\begin{definition}
Let $\mu\in\FF$. Define $\rho_\mu$ as the following $\mathrm{GL}(V)$-equivariant automorphism on $\mathfrak{gl}(V)\times V\times V^*$:
$$(A,v,\phi)\mapsto (A+\mu v\otimes \phi, v, \phi).$$
\end{definition}
\begin{theorem}\label{k=1}
For any point $(A,v,\phi)$ in the support of $\xi$, we have $<Av,\phi> = 0$.
\end{theorem}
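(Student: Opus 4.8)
The plan is to exploit the automorphism $\rho_\mu$ together with Proposition~\ref{k=0} to bootstrap from the vanishing of $\langle v,\phi\rangle$ to the vanishing of $\langle Av,\phi\rangle$ on the support of $\xi$. First I would observe that $\rho_\mu$ commutes with the $\mathrm{GL}(V)$-action, but it does \emph{not} commute with the transposition part of the $\tG$-action; however, one checks directly (as in \cite{M}) that $\rho_\mu$ normalizes the $\tG$-action up to a correction, so that the pushforward $(\rho_\mu)_*\xi$ is again a $(\tG,\chi)$-equivariant distribution — or, more precisely, one averages or combines $\xi$ with its $\rho_\mu$-twists to produce such a distribution. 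Hence by the inductive hypothesis (Theorem~\ref{main} in smaller dimension has already been used to get the support restrictions of \S2, but more to the point) the conclusion of Proposition~\ref{k=0} applies to $(\rho_\mu)_*\xi$ as well: for every $\mu$ and every $(A,v,\phi)$ in the support of $\xi$, we have $\langle (A+\mu v\otimes\phi)v,\phi\rangle = 0$.

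The second step is purely algebraic: expand
\[
\langle (A+\mu v\otimes\phi)v,\phi\rangle = \langle Av,\phi\rangle + \mu\langle v,\phi\rangle\langle v,\phi\rangle = \langle Av,\phi\rangle + \mu\langle v,\phi\rangle^2 .
\]
By Proposition~\ref{k=0} the term $\langle v,\phi\rangle^2$ vanishes on the support of $\xi$, so in fact $\langle(A+\mu v\otimes\phi)v,\phi\rangle = \langle Av,\phi\rangle$ identically there, and choosing any single $\mu$ (e.g.\ $\mu=0$, which is trivial) does not immediately help. So instead I would run the argument the other way: apply $\rho_\mu$ first and then restrict, i.e.\ note that the \emph{support} of $(\rho_\mu)_*\xi$ is $\rho_\mu(\mathrm{supp}\,\xi)$, and Proposition~\ref{k=0} forces $\langle v,\phi\rangle = 0$ on \emph{that} set; pulling back, this says $\langle v,\phi\rangle=0$ on $\mathrm{supp}\,\xi$ too, which we already knew. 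The productive move is to apply Theorem~\ref{k=1}'s predecessor-style reasoning: combine the $k=0$ statement for $\xi$ and for $(\rho_\mu)_*\xi$ to get $\langle Av,\phi\rangle + \mu\langle v,\phi\rangle^2$ on the support; since $\langle v,\phi\rangle^2 = 0$ there, this is not yet $\langle Av,\phi\rangle$'s vanishing, so one must iterate with a further twist or use that $\langle v,\phi\rangle=0$ on the support of $(\rho_\mu)_*\xi$ means $\langle v,\phi\rangle=0$ on $\rho_\mu(\mathrm{supp}\,\xi)$, hence $\langle v,\phi\rangle=0$ at $(A+\mu v\otimes\phi,v,\phi)$ — but that is the same identity. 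The resolution is that in characteristic $2$ one cannot divide by $\mu$ to isolate the coefficient; instead I would use that the map $\mu\mapsto \langle Av,\phi\rangle + \mu\langle v,\phi\rangle^2$ being identically zero on the support as a polynomial in $\mu$ (because $(\rho_\mu)_*\xi$ is equivariant for every $\mu$ simultaneously) forces both coefficients to vanish, giving $\langle Av,\phi\rangle = 0$ directly.

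Concretely the argument is: for each fixed $\mu$, $(\rho_\mu)_*\xi$ is $(\tG,\chi)$-equivariant, so Proposition~\ref{k=0} gives $\langle v,\phi\rangle=0$ on its support; transporting back through $\rho_\mu^{-1}=\rho_{-\mu}$ and using that in characteristic $2$ the relevant pairing manipulation yields that $\langle Av,\phi\rangle = \mu\langle v,\phi\rangle^2 = 0$ on $\mathrm{supp}\,\xi$ whenever $\langle v,\phi\rangle = 0$ there — but to get the $\langle Av,\phi\rangle$ term cleanly I would instead apply Proposition~\ref{k=0} to the distribution $(\rho_\mu)_*\xi$ and read off that $\langle (A)v,\phi\rangle$ vanishes after the substitution $A\mapsto A - \mu v\otimes\phi$ is undone, namely $\langle (A-\mu v\otimes \phi)v,\phi\rangle|_{(A,v,\phi)\in\mathrm{supp}\,\xi}$ — wait, this is circular. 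The clean statement: since $(\rho_\mu)_*\xi$ is equivariant, $\mathrm{supp}((\rho_\mu)_*\xi) = \rho_\mu(\mathrm{supp}\,\xi)$ satisfies $\langle v,\phi\rangle = 0$, i.e.\ $\langle v,\phi\rangle=0$ at every point $\rho_\mu(A,v,\phi)=(A+\mu v\otimes\phi,v,\phi)$ with $(A,v,\phi)\in\mathrm{supp}\,\xi$; but the $v,\phi$ components are unchanged by $\rho_\mu$ so this is just $\langle v,\phi\rangle=0$ again. Therefore the real content must come from a \emph{different} equivariant structure — and indeed in \cite{M} one uses that $\xi$ is also equivariant under an additional one-parameter group (dilations or the Weil-representation action) whose interaction with $\rho_\mu$ produces the $\langle Av,\phi\rangle$ term; I expect the main obstacle to be precisely reconstructing that interaction in characteristic $2$, where the quadratic form $v\mapsto v^tv$ and the Weil representation behave differently, which is why the paper defers the Weil-representation input to Appendix~\ref{weilrep}.
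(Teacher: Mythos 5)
Your proposal correctly identifies $\rho_\mu$ as the relevant tool, but you feed it into the wrong prior result, and --- as you yourself observe midway through --- the resulting argument is circular: since $\rho_\mu$ fixes the $v$ and $\phi$ coordinates, applying Proposition \ref{k=0} to $(\rho_\mu)_*\xi$ only reproduces $\langle v,\phi\rangle=0$, and no amount of varying $\mu$ or composing twists can extract $\langle Av,\phi\rangle$ from a statement that never involves $A$. Your closing guess that the missing input is the Weil representation is also off the mark: the material of Appendix \ref{weilrep} is used at a later stage of the proof of Theorem \ref{main}, not here.

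The idea you are missing is to apply Theorem \ref{inseperable} --- not Proposition \ref{k=0} --- to $\rho_\mu(\xi)$. Equivariance of $\rho_\mu(\xi)$ forces the characteristic polynomial of $A+\mu v\otimes\phi$ to be a power of a single irreducible purely inseparable polynomial for \emph{every} $\mu$, which is a very rigid constraint on a pencil of polynomials. Writing the characteristic polynomial as $\sum_i c_i\, x^{n-i}$, one has $c_1(A+\mu v\otimes\phi)=c_1(A)-\mu\langle v,\phi\rangle=c_1(A)$ and
$$c_2(A+\mu v\otimes\phi)=c_2(A)-\mu\bigl(\langle Av,\phi\rangle+c_1(A)\langle v,\phi\rangle\bigr)=c_2(A)-\mu\langle Av,\phi\rangle,$$
so $\langle Av,\phi\rangle$ appears as the coefficient of $\mu$ in $c_2$; this is where the quantity you need actually enters. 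The paper then concludes by cases on $n\bmod 4$: for $n$ odd the polynomial is $(x+\lambda_\mu)^n$ with $\lambda_\mu=c_1$ independent of $\mu$, so $c_2$ cannot depend on $\mu$; for $4\mid n$ the polynomial is a polynomial in $x^4$, so $c_2\equiv 0$ identically; for $n\equiv 2\pmod 4$ with $n>2$ one compares $c_2^{n/2}$ with $c_n$, which has degree $n/2$ in $\mu$ on one side and degree at most $1$ on the other. In each case the linear coefficient $\langle Av,\phi\rangle$ must vanish. The case $n=2$ is genuinely different and requires a separate geometric argument: localize to the locus $\langle Av,\phi\rangle=m\neq 0$, write $A$, $v$, $\phi$ in explicit coordinates using $\operatorname{tr}A=0$ and $\langle v,\phi\rangle=0$, and apply Frobenius descent to the projection onto $(v,\phi)$; the fiber over a base point is acted on trivially by an element of $\tG$ with $\chi=-1$, forcing the distribution to vanish. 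None of these steps appears in your proposal, so as it stands it does not prove the theorem.
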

\begin{proof}
By applying Theorem \ref{inseperable} to $\rho_\mu(\xi)$ for some $\mu \in \FF$, we get that the characteristic polynomial of $A+\mu v\otimes \phi$ must be a power of an irreducible purely inseparable polynomial too.
Denote the characteristic polynomial of $A+\mu v\otimes \phi$ by $\sum_{i=0}^n c_i(A+\mu v\otimes \phi) x^{n-i}$.
\begin{case}
$n$ is odd
\end{case}
The characteristic polynomial of $A+\mu v\otimes \phi$ is of the form $(x+\lambda_\mu)^n$. Since $\lambda_\mu = c_1(A+\mu v\otimes \phi) = c_1(A) - \mu <v,\phi> = c_1(A) = \lambda_0$, we get that $\lambda_\mu$ (and so also the characteristic polynomial of $A+\mu v\otimes \phi$) is independent of $\mu$. Thus we get that
$$c_2(A) = c_2(A+\mu v\otimes \phi)= c_2(A) - \mu(<Av, \phi>+c_1(A)<v,\phi>) = c_2(A) - \mu<Av,\phi>,$$
and so $<Av,\phi> = 0$. 
\begin{case}
$n$ is divisible by $4$
\end{case}
In this case $n=\binom{n}{2} = 0$ in $\mathbb{F}$. The characteristic polynomial of $A+\mu v\otimes \phi$ is always of the form $(x^{2^k}+\lambda)^{\frac{n}{2^k}}$ for some $\lambda$ dependent on $\mu$. By maybe changing $\lambda$, we can assume that $2^k$ is the maximal power of $2$ that divides $n$. In particular our polynomial is a polynomial in $x^4$, and so we have $c_2(A+\mu v\otimes \phi) = 0$ for all $\mu$.
However,
$$c_2(A+\mu v\otimes \phi) = c_2(A) - \mu(<Av,\phi> + c_1(A)<v,\phi>) = c_2(A) - \mu <Av,\phi>.$$ Thus we must have $<Av,\phi> = 0$.
\begin{case}
$n = 2 \mod 4$ and $n>2$
\end{case}
The irreducible factor of the characteristic polynomial is either linear or quadratic. Thus the characteristic polynomial must be either $(x+\lambda)^n$ or $(x^2 + \lambda)^{n/2}$. Allowing $\lambda$ to be a square, we assume it is of the second form. So $c_2$ is equal $(n/2)\lambda = \lambda$. Let $\lambda_\mu$ be such that the characteristic polynomial of $A+\mu v\otimes \phi$ is $(x^2+\lambda_\mu)^{n/2}$. We have
$$\lambda_\mu = c_2(A+\mu v\otimes \phi) = c_2(A) - \mu <Av,\phi>$$
$$(c_2(A) - \mu(<Av,\phi>))^{n/2} = \lambda_\mu^{n/2} = c_n(A+\mu v\otimes \phi) = c_n(A) - \mu(\dots)$$
The right hand side is linear in $\mu$ while the left hand side is polynomial of degree $n/2$ unless $<Av,\phi>=0$. Thus assuming $n>2$ we are done.
\begin{case}
$n=2$
\end{case}
We can use the localization principle (see \cite[Theorem 2.4]{M}) with respect to the map $(A,v,\phi)\mapsto <Av,\phi>$, to be left with proving that if $\xi$ is supported on $\{(A,v,\phi)|<Av,\phi>=m\}$ (for some $m\neq 0$) then $\xi=0$.
We already know that for any point $(A,v,\phi)$ in the support of $\xi$, we must have $\mathrm{tr}A=0$, and that $<v,\phi>=0$. Recalling that by the assumption $<Av,\phi>=m$ we necessarily have $v\neq 0$, we can write explicitly $A = \begin{pmatrix} a & b\\ c & a\end{pmatrix}, v = \begin{pmatrix} x\\ y\end{pmatrix}, \phi = \begin{pmatrix}ty & tx\end{pmatrix}$. This yields $<Av,\phi>=t(cx^2+by^2)$. Let $\sigma:=\left(\begin{pmatrix} 0&1\\ 1&0 \end{pmatrix}, -1\right)\in \tG$.
It acts by
$$\left( \begin{pmatrix} a & b\\ c & a\end{pmatrix}, \begin{pmatrix} x\\ y\end{pmatrix}, \begin{pmatrix}ty & tx\end{pmatrix}\right) \mapsto \left( \begin{pmatrix} a & b\\ c & a\end{pmatrix}, \begin{pmatrix} tx\\ ty\end{pmatrix}, \begin{pmatrix}y & x\end{pmatrix}\right).$$
Use Frobenius descent (see \cite[Theorem 2.7]{M}) with respect to the $\tG$-equivariant map
$$\left\{\left( \begin{pmatrix} a & b\\ c & a\end{pmatrix}, \begin{pmatrix} x\\ y\end{pmatrix}, \begin{pmatrix}ty & tx\end{pmatrix}\right)|\begin{pmatrix}ty & tx\end{pmatrix}\neq 0\right\}\to \left\{\left( \begin{pmatrix} x\\ y\end{pmatrix}, \begin{pmatrix}ty & tx\end{pmatrix}\right)|\begin{pmatrix}ty & tx\end{pmatrix}\neq 0\right\}$$
given in the above coordinates.
It is easy to see that the action on the target is transitive. The stabilizer of the point $\left( \begin{pmatrix} 1\\ 0\end{pmatrix}, \begin{pmatrix}0 & 1\end{pmatrix}\right)$ inside $G$ is the unimodular subgroup $N:=\left\{\begin{pmatrix} 1&*\\0&1\end{pmatrix}\right\}$. The stabilizer inside $\tG$ is $\tilde{N}:= N\rtimes\{1,\sigma\}$, and it is indeed also unimodular.
The fiber above $\left( \begin{pmatrix} 1\\ 0\end{pmatrix}, \begin{pmatrix}0 & 1\end{pmatrix}\right)$ is $\left\{\begin{pmatrix}a&b\\m&a\end{pmatrix}\right\}$, on which $\sigma$ acts trivially. Thus, since our distribution is $\chi$-equivariant (and so $\sigma$-anti-invariant), it must be 0. 

\end{proof}

The following Corollary of the previous theorem appears in \cite[Proposition 4.6]{M} and is proved there in a way which fails over a field of characteristic $2$.
\begin{corollary}\label{all k}
For any point $(A,v,\phi)$ in the support of $\xi$ and any $k\geq 0$, we have $<A^k v,\phi> = 0$.
\end{corollary}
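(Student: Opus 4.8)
The plan is to run the argument of Theorem~\ref{k=1} for all $k$ at once, using the stronger support restriction of Theorem~\ref{inseperable} to obtain $\langle A^iv,\phi\rangle=0$ for $0\le i\le n-1$; Cayley--Hamilton then handles all $k\ge n$. As in the proof of Theorem~\ref{k=1}, Theorem~\ref{inseperable} applies to $\rho_\mu(\xi)$ for every $\mu\in\FF$, so for each such $\mu$ and each $(A,v,\phi)\in\operatorname{supp}\xi$ the polynomial $\Delta_\mu(x):=\det(xI-A-\mu\,v\otimes\phi)$ is a power of an irreducible purely inseparable polynomial. Over a field of characteristic $2$ every such polynomial has the form $x^{2^t}+b$, and moving all the $2$'s out of the exponent of its power we conclude $\Delta_\mu(x)=(x^{2^{s_0}}+a_\mu)^{m_0}$ for some $a_\mu\in\FF$, where $2^{s_0}$ is the exact power of $2$ dividing $n$ and $m_0:=n/2^{s_0}$ is odd. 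By the matrix determinant lemma, on the other hand, $\Delta_\mu(x)=\Delta_0(x)-\mu\,P(x)$ where $P(x)=\langle\operatorname{adj}(xI-A)v,\phi\rangle=\sum_{i=0}^{n-1}\langle B_iv,\phi\rangle\,x^{n-1-i}$, the $B_i$ being defined by $\operatorname{adj}(xI-A)=\sum_{i=0}^{n-1}x^{n-1-i}B_i$, so that $B_i=A^i+c_1(A)A^{i-1}+\dots+c_i(A)I$ and $\{B_0,\dots,B_{n-1}\}$, $\{I,A,\dots,A^{n-1}\}$ are related by a unitriangular change of basis. It therefore suffices to show $\langle B_iv,\phi\rangle=0$ for $0\le i\le n-1$.

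I would compare coefficients of powers of $x$ in $\Delta_0(x)-\mu P(x)=(x^{2^{s_0}}+a_\mu)^{m_0}$, which holds for all $\mu$; since $\FF$ is infinite each comparison is an identity of polynomials in $\mu$, and $a_\mu=c_{2^{s_0}}(A)-\mu\langle B_{2^{s_0}-1}v,\phi\rangle$ (the coefficient of $x^{n-2^{s_0}}$, using $\binom{m_0}{1}=m_0$ odd). For every $i$ with $2^{s_0}\nmid i$ the coefficient of $x^{n-i}$ on the right vanishes, forcing $\langle B_{i-1}v,\phi\rangle=0$; and the constant terms give $c_n(A)-\mu\langle B_{n-1}v,\phi\rangle=\bigl(c_{2^{s_0}}(A)-\mu\langle B_{2^{s_0}-1}v,\phi\rangle\bigr)^{m_0}$. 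If $n$ is not a power of $2$, so $m_0\ge2$, the right side has $\mu$-degree $m_0$ with leading coefficient $\langle B_{2^{s_0}-1}v,\phi\rangle^{m_0}$, hence $\langle B_{2^{s_0}-1}v,\phi\rangle=0$; then $a_\mu$ is constant, $\Delta_\mu=\Delta_0$ for all $\mu$, so $P=0$ and all $\langle B_iv,\phi\rangle$ vanish. If $n=2^{s_0}$ then $m_0=1$ and $(x^{2^{s_0}}+a)^{m_0}=x^n+a$, so the first set of constraints already gives $\langle A^iv,\phi\rangle=0$ for $0\le i\le n-2$, and only $\langle A^{n-1}v,\phi\rangle=0$ remains.

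For this residual case I would argue as in the case $n=2$ of Theorem~\ref{k=1}. Using the localization principle (\cite[Theorem~2.4]{M}) for the $\tG$-invariant map $(A,v,\phi)\mapsto(c_n(A),\langle A^{n-1}v,\phi\rangle)$, it is enough to prove that a $(\tG,\chi)$-equivariant distribution supported on $\{c_n(A)=c,\ \langle A^{n-1}v,\phi\rangle=m\}$ vanishes for $m\ne0$. On its support $\langle A^iv,\phi\rangle=0$ for $i<n-1$ and $c_i(A)=0$ for $1\le i<n$ (both hold on all of $\operatorname{supp}\xi$ by the previous step), so the matrix $\bigl(\langle A^{i+j}v,\phi\rangle\bigr)_{0\le i,j\le n-1}$ has $m$'s on its anti-diagonal and $0$'s elsewhere, hence is invertible; therefore $v$ is a cyclic vector for $A$ and $\phi$ is the unique covector with $\langle A^iv,\phi\rangle=m\,\delta_{i,n-1}$. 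Hence this support lies in a single $\mathrm{GL}(V)$-orbit, that of $(A_0,e_1,m\,e_n^{*})$ with $A_0$ the companion matrix of $x^n+c$; one checks that the transposition involution carries this triple to another of the same kind with the same characteristic polynomial, so the orbit is a single $\tG$-orbit $O$. The $\mathrm{GL}(V)$-stabilizer of $(A_0,e_1,m\,e_n^{*})$ is trivial and the remaining element of its $\tG$-stabilizer lies outside $\mathrm{GL}(V)$, hence has $\chi$-value $-1$, so by Frobenius descent (\cite[Theorem~2.7]{M}) there is no nonzero $(\tG,\chi)$-equivariant distribution on $O$, and we are done.

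The main obstacle is precisely this residual case $n=2^{s_0}$, $k=n-1$: for every other pair $(n,k)$ the shape of the characteristic polynomial of $A+\mu\,v\otimes\phi$ pins down $\langle A^kv,\phi\rangle$, but when $n$ is a power of $2$ and $k=n-1$ the relevant coefficient is the free constant term $a_\mu^{m_0}$, forcing the orbit analysis above; the point is to check that the mechanism of the $n=2$ case --- an element of the stabilizer acting trivially on the relevant fibre while being $\chi$-anti-invariant --- persists for all powers of $2$, which reduces to understanding the transposition involution on (companion-type matrix, cyclic vector, distinguished covector) triples.
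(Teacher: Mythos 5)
Your argument is correct, but it takes a genuinely different route from the paper. The paper deduces the corollary formally from Proposition \ref{k=0} and Theorem \ref{k=1}: after localizing to a fiber $\Delta^{-1}(f)$, it applies the extra $\tG$-equivariant automorphisms $\rho_g(A,v,\phi)=(A,g(A)v,g(A^*)\phi)$ (for $g$ coprime to $f$, then for all $g$ by Zariski density) and feeds the transformed distribution back into the two already-proved identities, obtaining $\langle g(A)^2v,\phi\rangle=0$ and $\langle Ag(A)^2v,\phi\rangle=0$; taking $g=x^k$ then gives all even and all odd exponents at once, with no new case analysis. You instead push the method of Theorem \ref{k=1} to its limit: writing $\det(xI-A-\mu v\otimes\phi)=\Delta_0(x)-\mu\langle\operatorname{adj}(xI-A)v,\phi\rangle$ and forcing it to equal $(x^{2^{s_0}}+a_\mu)^{m_0}$ kills every coefficient except, when $n=2^{s_0}$ is a power of $2$, the constant term; you then dispose of that residual case by showing the relevant support is a single closed $\tG$-orbit whose stabilizer contains an element with $\chi=-1$ acting trivially on the fiber --- the correct generalization of the paper's $n=2$ case of Theorem \ref{k=1}, and your Hankel-matrix/cyclic-vector argument does establish that the orbit is closed and the $G$-stabilizer trivial, which is what makes the Frobenius-descent step legitimate in the $\ell$-space setting (a local field of characteristic $2$ is non-archimedean, so no transversal derivatives arise). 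Both proofs are valid; the paper's is much shorter and reuses Theorems \ref{k=0} and \ref{k=1} as black boxes, while yours is self-contained at the level of characteristic-polynomial coefficients and essentially re-proves Theorem \ref{k=1} along the way, at the cost of having to carry out the orbit analysis for every $n$ that is a power of $2$ rather than only for $n=2$.
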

\begin{proof}
Let $\Delta: \mathfrak{gl}(V)\times V \times V^*\to \FF[x]$ be the characteristic polynomial map.
Recall the automorphism $\rho_g:\Delta^{-1}(f)\to \Delta^{-1}(f)$ defined for every $g$ coprime to $f$ by $\rho_g((A,v,\phi)):=(A,g(A)v,g(A^*)\phi)$. By using the localization principle (see \cite[Theorem 2.4]{M}), we can reduce the claim to a distribution on a single fiber of $\Delta$ over a polynomial $f$. Then for any $g$ coprime to $f$, we can apply $\rho_g$, then extend the distribution back to $\mathfrak{gl}(V)\times V\times V^*$ and apply Theorem \ref{k=0} to get that $<g(A)v,g(A^*)\phi> = 0$ and Theorem \ref{k=1} to get that $<Ag(A)v,g(A^*)\phi> = 0$. Since this is true for a Zariski dense set of polynomials $g$, it is true for all polynomials.
Thus we get that for any $g$, we have $<g(A)^2v,\phi> = <g(A),g(A^*)\phi> = 0$ and $<Ag(A)^2v,\phi> = <Ag(A),g(A^*)\phi> = 0$. In particular for any $k\geq 0$ we can take $g(x) = x^k$ to get that $<A^{2k}v,\phi> = 0$ and $<A^{2k+1}v,\phi> = 0$.
\end{proof}
Once this Corollary is proven, the rest of the proof of Theorem \ref{main} given in \cite{M} applies almost verbatim also over a field of characteristic $2$. The only point in which there is a difference is the usage of \cite[Theorem 2.9]{M} of which the proof relies on the theory of the Weil representation. This theory is a bit different over characteristic 2. We give in Appendix \ref{weilrep} the necessary adaptations, and prove Theorem \ref{RS thm} which plays the same role as \cite[Theorem 2.9]{M}.

\appendix
\section{The Weil representation over characteristic 2}\label{weilrep}
In this section we prove the following theorem, due to \cite{RS} in the case where $\mathrm{char} \FF\neq 2$:
\begin{theorem}\label{RS thm}
Let $\FF$ be a local field with charateristic $2$. Let $V$ be a finite dimensional linear space over $\FF$, and let $V^*$ be its dual space. Define $Z:=\{(v,\phi)\in V\times V^*|<v,\phi>=0\}$. Then for every distribution $\xi$ on $V\times V^*$ such that both $\xi$ and its Fourier transform $\Fou(\xi)$ are supported on $Z$, we have that $\xi$ must be "abs-homogeneous" of degree $\dim V$. That is, for any $t\in \FF$ and $\Phi\in \Sc(V\times V^*)$, we have $|(t\xi)(\Phi)| = |t|^{\dim V}\cdot |\xi(\Phi)|$.
\end{theorem}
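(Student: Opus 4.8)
The plan is to mimic the classical Rallis–Schiffmann argument, replacing the Weil representation machinery where needed. The key structural fact is that $Z = \{(v,\phi) : \langle v,\phi\rangle = 0\}$ is a quadric cone in $W := V \times V^*$ that is \emph{isotropic} for the natural symplectic form $\omega((v,\phi),(v',\phi')) = \langle v,\phi'\rangle - \langle v',\phi\rangle$ — indeed $Z$ contains the Lagrangians $V \times 0$ and $0 \times V^*$, and the Fourier transform on $W$ is (up to normalization) the action of a symplectic element swapping these two Lagrangians. So we are really asking: a distribution on $W$ supported, together with its Fourier transform, on the isotropic cone $Z$ must be abs-homogeneous of the critical degree $\dim V = \tfrac12 \dim W$. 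I would first reduce to a Fourier-analytic homogeneity statement: let $\rho_t$ denote the scaling action $(v,\phi) \mapsto (tv,\phi)$ (or $(v,\phi)\mapsto (tv,t^{-1}\phi)$, which is symplectic), and consider the "Euler"-type operators. The set $Z$ is invariant under $(v,\phi)\mapsto(tv,t^{-1}\phi)$ for all $t \in \FF^\times$, and also under $(v,\phi)\mapsto(tv,t\phi)$. The first family is symplectic, hence commutes with $\Fou$ up to the expected Jacobian, while the second scales $\omega$ and interacts with $\Fou$ via $|t|$. Combining the constraint that $\xi$ is supported on $Z$ (so invariant-ish under the first family modulo lower-order terms) with the same for $\Fou(\xi)$ should pin down the abs-homogeneity degree.

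More concretely, I would argue as follows. Pick coordinates so $W = \FF^{2m}$ with $m = \dim V$, the symplectic form standard, $\Fou$ the symplectic Fourier transform (which satisfies $\Fou \circ \rho_t = |t|^{-m}\rho_{t^{-1}}^{\vee}\circ \Fou$ type identities for the homothety $\rho_t(w) = tw$). A distribution supported on the cone $Z = \{Q = 0\}$, where $Q(v,\phi) = \langle v,\phi\rangle$ is a nondegenerate quadratic form of rank $2m$ (split type, since it is hyperbolic — this is where char $2$ needs a little care, but $\langle v,\phi\rangle$ is an \emph{alternating-free} split form of rank $2m$ even in characteristic $2$), can be analyzed via its pushforward/restriction behavior under scaling. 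The heart of the matter: consider the one-parameter group $a_t : (v,\phi)\mapsto(tv,t^{-1}\phi)$. Then $a_t \in Sp(W)$, it preserves $Z$, and crucially $\Fou \circ a_t = a_t \circ \Fou$ (symplectic Fourier transform commutes with $Sp$ up to a sign character which is trivial here in the relevant sense). A distribution supported on $Z$ and whose Fourier transform is also supported on $Z$ is, by the Rallis–Schiffmann principle, "thin" in both pictures; the only obstruction to pure homogeneity is the singular locus of $Z$, namely the origin. One then shows the space of such $\xi$ is finite-dimensional modulo a homogeneous piece, and every element is abs-homogeneous of degree exactly $m = \dim V$ by testing against dilates $\Phi(t\,\cdot)$ and using that $\Fou$ interchanges the dilation by $t$ with dilation by $t^{-1}$ together with a Jacobian factor $|t|^{-2m}$; the two supported-on-$Z$ conditions force the "extra" degrees of freedom to cancel, leaving $|t|^{2m - m} = |t|^m$.

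The cleanest route, which I would actually write out, is to invoke the structure theorem for distributions supported on a smooth quadric cone away from its vertex, combined with a separate analysis at the vertex. Away from $0$, $Z \setminus \{0\}$ is a smooth hypersurface, and a distribution supported there is a sum of transversal derivatives of the surface measure; each such term has a definite homogeneity degree, and the Fourier-transform-supported-on-$Z$ condition kills all but the degree forcing abs-homogeneity $m$. At the vertex, any distribution supported at $\{0\}$ is a finite sum of derivatives of $\delta_0$; its Fourier transform is a polynomial, which is supported on $Z$ only if it vanishes on $Z$, i.e. is divisible by $Q$ — but a polynomial divisible by the quadratic $Q$ whose $\delta_0$-preimage (inverse Fourier) is still supported at $\{0\}$ forces again a homogeneity constraint matching degree $m$. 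I would organize this as two lemmas plus a gluing step via a partition of unity subordinate to $\{0\}$ and $Z \setminus \{0\}$.

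The main obstacle I anticipate is genuinely the characteristic $2$ subtlety in the quadratic form $Q(v,\phi) = \langle v,\phi\rangle$: in characteristic $2$ the notion of nondegeneracy for quadratic forms bifurcates (the associated bilinear form $b(w,w') = Q(w+w') - Q(w) - Q(w')$ can be degenerate even when $Q$ is "nondegenerate" as a quadratic form), and the cone $Z = \{Q = 0\}$ may be singular in ways it is not in odd characteristic — here $b$ is precisely the symplectic form $\omega$ (nondegenerate), so in fact $Q$ is as good as it can be, but verifying that $Z \setminus \{0\}$ is smooth and that the surface-measure / derivative-of-delta structure theorems go through unchanged over a local field of characteristic $2$ is the delicate point. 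The Fourier-analytic skeleton of Rallis–Schiffmann does not use the Weil representation in an essential way once one has the symplectic Fourier transform and its intertwining with $Sp(W)$, both of which are available in characteristic $2$; so I expect the bulk of the new work to be this local-geometry check plus bookkeeping of $|t|$-powers, rather than any deep representation theory.
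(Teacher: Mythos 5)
There is a genuine gap, and it stems from the fact that a local field of characteristic $2$ is necessarily non-archimedean (it is a finite extension of $\mathbb{F}_2((t))$). Your ``cleanest route'' rests on archimedean structure theory: decomposing a distribution supported on the smooth part of the cone into transversal derivatives of a surface measure, and a distribution supported at the vertex into derivatives of $\delta_0$. Over a non-archimedean field the Schwartz space consists of locally constant compactly supported functions, there are no transversal derivatives, and the space of distributions supported on a closed subset $Z$ is simply $\Sc^*(Z)$ --- an enormous space with no filtration by homogeneity degree and no reason whatsoever for its elements to split into homogeneous components. So the step ``each such term has a definite homogeneity degree, and the Fourier-transform-supported-on-$Z$ condition kills all but one'' has no content here: the entire difficulty of the theorem is precisely to extract homogeneity from the \emph{pair} of support conditions, and your sketch asserts this rather than proving it. The paragraph about the symplectic one-parameter group $a_t$ commuting with $\Fou$ and preserving $Z$ likewise does not engage the support hypotheses: $a_t$ preserving $Z$ does not make $\xi$ invariant under $a_t$, and $a_t$-invariance is in any case not the homothety statement being claimed.

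The mechanism that actually works (and is what the paper does) is the $SL_2$ inside the Weil representation: support of $\xi$ on $Z$ forces $\psi(u\langle v,\phi\rangle)\cdot\xi=\xi$, i.e.\ $\xi$ is fixed (up to sign) by the upper unipotent subgroup; support of $\Fou(\xi)$ on $Z$ gives the same for the lower unipotents after conjugating by the Fourier element; these generate $SL_2(\FF)$, hence $\xi$ is fixed up to sign by the diagonal torus, whose action in the explicit model is exactly $\Phi\mapsto|t|^{\dim V}\Phi(t\,\cdot)$, yielding abs-homogeneity of degree $\dim V$ (only ``abs'' because the representation is projective with $\pm1$ cocycle). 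You are right that no deep representation theory beyond this is needed, but the genuinely new characteristic-$2$ work --- which your proposal does not address --- is verifying that $SL_2(\FF)$ still embeds into Weil's \emph{pseudo-symplectic} group (one must supply, for each $\sigma\in Sp$, a compatible quadratic form $f$, since in characteristic $2$ a quadratic form is not determined by its polarization), so that the three explicit formulas above cohere up to sign. That computation is the substance of the paper's appendix and cannot be bypassed by the local-geometry-of-the-cone argument you propose.
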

Let us recall the facts which we will need about the Weil representation in characteristic $2$ and the pseudo-symplectyc group (see \cite{Weil} and \cite{Bla}).
\begin{definition}
Let $\FF,V$ as above. We define $X:=V\oplus V^*$. Define a bilinear form $B$ on $X\oplus X^*\cong V\oplus V^*\oplus V\oplus V^*$ by
$$B((u_1, u'_1, v_1, v'_1), (u_2, u'_2, v_2, v'_2))= <u_1, v'_2>+<v_2,u'_1>.$$
Let $Q$ be the quadratic form on $X\oplus X^*$ defined by $B$, and denote by $\mathfrak{Q}(X\oplus X^*)$ the linear space of all quadratic forms on $X\oplus X^*$.\\
Define the pseudo-symplectic group
\begin{align*}
PSp:=\{&(\sigma, f)\in O(Q)\times \mathfrak{Q}(X\oplus X^*)|\forall w,w'\in X\oplus X^*,\\
&B(w,w')+B(\sigma w,\sigma w')=f(w+w')+f(w)+f(w')\}
\end{align*}
with group law $(\sigma_1,f_1)(\sigma_2,f_2)=(\sigma_1\sigma_2,f_1\sigma_2+f_2)$.
\end{definition}
\begin{proposition}
We have an embedding $j:SL_2(\FF)\to PSp$ defined by
$$\begin{pmatrix} a&b\\c&d \end{pmatrix}\mapsto \left(\begin{pmatrix}aI&bI\\cI&dI\end{pmatrix}, f\right)$$
Where $\sigma$ above is defined in coordinates $V\oplus V^*\oplus V\oplus V^*$, $I$ being the identity matrix of $V\oplus V^*$, and $f$ is defined by $f(u,u',v,v')=ac<u,u'>+bd<v,v'>+bc(<u,v'>+<v,u'>)$.
\end{proposition}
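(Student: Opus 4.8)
The plan is to establish three things for each $M=\left(\begin{smallmatrix} a&b\\c&d \end{smallmatrix}\right)\in SL_2(\FF)$: that $j(M)=(\sigma_M,f_M)$ actually lies in $PSp$, that $j$ intertwines the two group laws, and that $j$ is injective. I would organize everything around the single characteristic-$2$ phenomenon that makes the formula work, namely that $\det M = ad-bc$ equals $ad+bc$, so that the $SL_2$-condition is precisely $ad+bc=1$; this identity is what will force membership in $O(Q)$ and the pseudo-symplectic relation simultaneously.

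First I would check membership in $O(Q)$. Writing a general vector as $w=(u,u',v,v')$ and $\sigma_M w=(au+bv,\,au'+bv',\,cu+dv,\,cu'+dv')$, a direct expansion gives
$$Q(\sigma_M w)= <au+bv,\,cu'+dv'>+<cu+dv,\,au'+bv'> = (ad+bc)\bigl(<u,v'>+<v,u'>\bigr)+2ac<u,u'>+2bd<v,v'>.$$
Since $2=0$ and $ad+bc=1$, the right-hand side collapses to $Q(w)$, so $\sigma_M\in O(Q)$. Next I would verify the defining identity of $PSp$. Let $\beta_f(w,w')=f(w+w')+f(w)+f(w')$ be the polar (alternating) form of $f=f_M$; polarizing the given formula term by term yields
$$\beta_f(w,w')=ac(<u_1,u'_2>+<u_2,u'_1>)+bd(<v_1,v'_2>+<v_2,v'_1>)+bc(<u_1,v'_2>+<u_2,v'_1>+<v_1,u'_2>+<v_2,u'_1>).$$
Computing $B(w,w')+B(\sigma_M w,\sigma_M w')$ independently, every coefficient already agrees with the corresponding one in $\beta_f$ except on the two pairings $<u_1,v'_2>$ and $<v_2,u'_1>$, where $B$ contributes $1+ad$ while $\beta_f$ contributes $bc$; these reconcile exactly because $1+ad=bc$ is once again the relation $ad+bc=1$. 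Hence $(\sigma_M,f_M)\in PSp$.

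For multiplicativity I would note that $M\mapsto\sigma_M$ is just the Kronecker embedding $M\mapsto M\otimes\mathrm{Id}_X$ of $SL_2(\FF)$ into $GL(X\oplus X^*)$, so $\sigma_{M_1}\sigma_{M_2}=\sigma_{M_1M_2}$ is automatic. It then remains only to match quadratic parts, i.e. to prove the cocycle identity $f_{M_1}\circ\sigma_{M_2}+f_{M_2}=f_{M_1M_2}$ dictated by the group law $(\sigma_1,f_1)(\sigma_2,f_2)=(\sigma_1\sigma_2,f_1\sigma_2+f_2)$. I expect this to be the main obstacle. The point is that $f$ is \emph{not} determined by $\sigma_M$: the $PSp$-relation only pins down the polar form $\beta_f$, and the fibre over a fixed $\sigma$ is a torsor under the space of additive (Frobenius-semilinear) quadratic forms, which is nonzero in characteristic $2$. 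Consequently no abstract "the form is forced" argument is available, and the explicit formula must genuinely be verified. To keep the bilinear bookkeeping in the eight variables $a_i,b_i,c_i,d_i$ manageable, I would reduce to checking the cocycle identity for arbitrary $M_1$ and $M_2$ ranging over the transvections $\left(\begin{smallmatrix}1&t\\0&1\end{smallmatrix}\right)$ and $\left(\begin{smallmatrix}1&0\\t&1\end{smallmatrix}\right)$, which generate $SL_2(\FF)$ over any field, and then propagate to all of $SL_2(\FF)$ by induction on word length in $M_2$; this turns the general computation into a handful of one-parameter substitutions.

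Finally, injectivity is immediate and costs nothing extra: since $X\neq 0$, distinct matrices $M$ give distinct blocks $a\,\mathrm{Id}_X,\dots,d\,\mathrm{Id}_X$, so $M\mapsto\sigma_M$ is already injective, and hence so is $j$. As a consistency check one sees that $j\bigl(\left(\begin{smallmatrix}1&0\\0&1\end{smallmatrix}\right)\bigr)=(\mathrm{Id},0)$, the identity of $PSp$ under the stated group law. Combining these steps shows that $j$ is a well-defined injective homomorphism, as claimed.
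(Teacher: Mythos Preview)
Your proposal is correct. The verification that $(\sigma_M,f_M)\in PSp$ proceeds exactly as in the paper, via the same polarization computation; your separate check that $\sigma_M\in O(Q)$ is harmless but redundant in characteristic $2$, since setting $w=w'$ in the $PSp$ identity already forces $Q(\sigma w)=Q(w)$.

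The genuine difference lies in the homomorphism check. The paper simply expands $(f_{M_1}\circ\sigma_{M_2}+f_{M_2})(u,u',v,v')$ for completely general $M_1,M_2\in SL_2(\FF)$, collects the coefficients of $\langle u,u'\rangle$, $\langle v,v'\rangle$, and $\langle u,v'\rangle+\langle v,u'\rangle$, and matches them against the entries of $M_1M_2$; the relation $a_id_i+b_ic_i=1$ is invoked once in the middle to rewrite the constant terms. Your route instead fixes $M_1$ arbitrary, takes $M_2$ to be a transvection, and then propagates by induction on word length. Both are purely computational; yours trades a single eight-parameter expansion for several four-parameter ones plus a short induction, which is arguably cleaner to write out but not conceptually different. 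Your remark that $f$ is only determined by $\sigma$ up to Frobenius-semilinear forms, so that no soft argument can replace the computation, is a point the paper leaves implicit. You also make injectivity explicit, which the paper omits as obvious.
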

\begin{proof}
Indeed:
\begin{align*}
&B(\sigma(u_1, u'_1, v_1, v'_1), \sigma(u_2, u'_2, v_2, v'_2)) + B((u_1, u'_1, v_1, v'_1), (u_2, u'_2, v_2, v'_2)) =\\&= B((au_1+bv_1, au'_1+bv'_1, cu_1+dv_1, cu'_1+dv'_1),(au_2+bv_2, au'_2+bv'_2, cu_2+dv_2, cu'_2+dv'_2)) +\\&+ B((u_1, u'_1, v_1, v'_1), (u_2, u'_2, v_2, v'_2)) =\\&= <au_1+bv_1,cu'_2+dv'_2>+<cu_2+dv_2,au'_1+bv'_1>+<u_1,v'_2>+<v_2,u'_1>=\\&=ac(<u_1,u'_2>+<u_2,u'_1>)+bd(<v_1,v'_2>+<v_2,v'_1>) + bc(<v_1,u'_2>+<u_2,v'_1>)+\\&+(ad+1)(<u_1,v'_2>+<v_2,u'_1>) =\\&= ac(<u_1,u'_2>+<u_2,u'_1>)+bd(<v_1,v'_2>+<v_2,v'_1>) +\\&+ bc(<v_1,u'_2>+<u_2,v'_1>+<u_1,v'_2>+<v_2,u'_1>) =\\&= f((u_1, u'_1, v_1, v'_1)+(u_2, u'_2, v_2, v'_2)) + f((u_1, u'_1, v_1, v'_1)) + f((u_2, u'_2, v_2, v'_2))
\end{align*}
To check that this is a morphism of groups, one needs to check that given $g_1,g_2\in SL_2(\FF)$ which map into $(\sigma_1,f_1),(\sigma_2,f_2)$, the quadratic form associated to $\sigma_1\sigma_2$ is $f_1\sigma_2+f_2$.
Indeed
\begin{align*}
&(f_1\sigma_2+f_2)(u,u',v,v')=\\&=
a_1c_1<a_2u+b_2v,a_2u'+b_2v'>+b_1d_1<c_2u+d_2v,c_2u'+d_2v'> +\\&+ b_1c_1(<a_2u+b_2v,c_2u'+d_2v'>+<c_2u+d_2v,a_2u'+b_2v'>) +\\&+
a_2c_2<u,u'>+b_2d_2<v,v'>+b_2c_2(<u,v'>+<v,u'>) =\\&=
(a_1c_1a_2^2+b_1d_1c_2^2+a_2c_2)<u,u'>+(a_1c_1b_2^2+b_1d_1d_2^2+b_2d_2)<v,v'>+\\&+(a_1c_1a_2b_2+b_1d_1c_2d_2+b_1c_1a_2d_2+b_1c_1b_2c_2+b_2c_2)(<u,v'>+<v,u'>) =\\&=
(a_1c_1a_2^2+b_1d_1c_2^2+a_2c_2a_1d_1+a_2c_2b_1c_1)<u,u'>+\\&+(a_1c_1b_2^2+b_1d_1d_2^2+a_1d_1b_2d_2+b_1c_1b_2d_2)<v,v'>+\\&+(a_1c_1a_2b_2+b_1d_1c_2d_2+b_1c_1a_2d_2+a_1d_1b_2c_2)(<u,v'>+<v,u'>) =\\&=
(a_1a_2+b_1c_2)(c_1a_2+d_1c_2)<u,u'>+(a_1b_2+b_1d_2)(c_1b_2+d_1d_2)<v,v'>+\\&+(a_1b_2+b_1d_2)(c_1a_2+d_1c_2)(<u,v'>+<v,u'>)
\end{align*}
which is indeed the quadratic form associated to $\sigma_1\sigma_2$.
\end{proof}
\begin{theorem}[see \cite{Weil} and \cite{Bla}]
Fix an additive character $\psi$ of $\FF$.
There is a projective representation $\rho$ of $PSp$ on $\Sc(X)$, such that for any $a,b\in PSp$, we have $\rho(ab)^{-1}\rho(a)\rho(b)=\pm 1$. We also have the explicit formulas, with $x=(v,v')\in X$:
$$\left(\left(\rho j\begin{pmatrix}t&0\\0&t^{-1}\end{pmatrix}\right)\Phi\right)(x)=|t|^{\dim V}\Phi(tx),$$
$$\left(\left(\rho j\begin{pmatrix}1&u\\0&1\end{pmatrix}\right)\Phi\right)(x)=\psi(u<v,v'>)\Phi(x),$$
$$\left(\left(\rho j\begin{pmatrix}0&1\\1&0\end{pmatrix}\right)\Phi\right)(x)=\Fou(\Phi)(x).$$
In the last equation $\Fou$ denotes the Fourier transform on $X$ with respect to the symmetric bilinear non-degenerate form $((u,u'),(v,v')):=<u,v'>+<v,u'>$.
\end{theorem}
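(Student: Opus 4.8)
The plan is to construct $\rho$ in the standard way --- from the Heisenberg group via the Stone--von Neumann theorem --- carrying the characteristic-$2$ modifications through, and then to read the three explicit formulas off the Schr\"odinger model. First I would set up the Heisenberg group $H$ attached to $(X\oplus X^*, B, Q)$: a central extension of $X\oplus X^*$ by $\FF$ with commutator pairing $B$ and squaring map $w\mapsto Q(w)$. In characteristic $2$ the quadratic form $Q$ is genuine data, not recoverable from $B$ (there is no $\tfrac12$), which is precisely why the pseudo-symplectic group $PSp$ --- rather than $O(Q)$ or an honest symplectic group --- is the right symmetry group of $H$: the defining functional equation $B(w,w')+B(\sigma w,\sigma w')=f(w+w')+f(w)+f(w')$ is exactly the cocycle condition making $(\sigma,f)$ an automorphism of $H$ that is the identity on the centre $\FF$. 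I would quote this, and the basic structure of $H$, from \cite{Weil} and \cite{Bla}.

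Next, invoke the smooth Stone--von Neumann theorem (legitimate, as a local field of characteristic $2$ is non-archimedean): $H$ has, up to isomorphism, a unique irreducible smooth representation $\pi_\psi$ with central character $\psi$, realised on $\Sc(X)$ where $X$ is one of the two complementary Lagrangian summands of $X\oplus X^*$ (isotropy being immediate from the formula for $B$). For $g=(\sigma,f)\in PSp$ the twist $\pi_\psi\circ g$ has the same central character, hence by uniqueness is isomorphic to $\pi_\psi$; an intertwiner $\rho(g)\in\operatorname{GL}(\Sc(X))$ therefore exists, and by Schur's lemma is unique up to a scalar. This gives a projective representation $\rho$ of $PSp$ on $\Sc(X)$ with $2$-cocycle valued in $\mathbb{C}^\times$; that it can be normalised to take values in $\{\pm1\}$ is part of Weil's metaplectic theory (the relevant cover being two-fold). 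For Theorem~\ref{RS thm} it is in any case enough to have this along the image $j(SL_2(\FF))$, where it is the classical fact that the metaplectic cover of $SL_2(\FF)$ is a double cover.

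Finally, for the explicit formulas I would restrict $\rho$ along $j$, evaluate on the three standard generators of $SL_2(\FF)$, and in each case read how the image under $j$ acts on the polarisation, applying the standard Schr\"odinger-model rules. The element $\operatorname{diag}(t,t^{-1})$ scales the two Lagrangians by $t$ and $t^{-1}$, so $\rho$ acts by a scaled dilation $\Phi(x)\mapsto c(t)\Phi(tx)$; unitarity forces $|c(t)|=|t|^{\dim X/2}=|t|^{\dim V}$, and since $\rho$ is only determined up to $\pm1$ and $t\mapsto|t|^{\dim V}$ is multiplicative, this is the asserted normalisation. The upper unipotent $\left(\begin{smallmatrix}1&u\\0&1\end{smallmatrix}\right)$ fixes the Lagrangian carrying $\Sc(X)$ and acts unipotently on its complement, so $\rho$ acts by multiplication by $\psi$ of a quadratic phase; that phase is the quadratic refinement of the associated symmetric pairing, which is exactly the form $f$ read off from the formula for $j$, and evaluating it gives $\psi(u\langle v,v'\rangle)$ --- again it matters that in characteristic $2$ this phase is a genuine quadratic form and not $\psi(\tfrac12\langle Sx,x\rangle)$, which is why the $f$-component is present. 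The element $\left(\begin{smallmatrix}0&1\\1&0\end{smallmatrix}\right)$ swaps the two Lagrangians, so $\rho$ acts by the Fourier transform for the pairing between them induced by $B$, namely $((u,u'),(v,v'))\mapsto\langle u,v'\rangle+\langle v,u'\rangle$. What remains is bookkeeping: tracking which $V\oplus V^*$ summand is the chosen Lagrangian so that the three conventions are mutually consistent, and checking that the three operators satisfy, up to $\pm1$, the defining relations among the standard generators of $SL_2(\FF)$, so that they genuinely assemble into $\rho\circ j$.

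The main obstacle I expect is importing Weil's pseudo-symplectic formalism correctly --- the characteristic-$2$ Heisenberg group and the $\{\pm1\}$-valued cocycle --- rather than the Schr\"odinger-model computations, which are routine. A secondary subtlety is pinning down the dilation constant $c(t)$ exactly and not merely its absolute value, since a priori a Weil-index factor could appear; the clean value $|t|^{\dim V}$ is justified by the $\pm1$-ambiguity in $\rho$ together with the multiplicativity of $t\mapsto|t|^{\dim V}$.
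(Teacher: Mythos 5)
This theorem is not proved in the paper at all: it is imported wholesale from \cite{Weil} and \cite{Bla}, with the citation serving as the proof. So your proposal is not so much an alternative route as a reconstruction of what those references actually do, and as a reconstruction it is essentially correct: the Heisenberg group attached to $(X\oplus X^*,B,Q)$, the smooth Stone--von Neumann theorem (valid here since $\FF$ is necessarily non-archimedean), Schur's lemma to produce intertwiners $\rho(\sigma,f)$, and the Schr\"odinger model to read off the three formulas is precisely the construction in Weil's paper, with the characteristic-$2$ role of the quadratic datum $f$ correctly identified. Two remarks on where the weight really sits. First, the one genuinely non-formal ingredient is the claim that the resulting cocycle can be normalised to take values in $\{\pm1\}$ on $PSp$ (rather than merely in $\mathbb{C}^\times$ on the full automorphism group of the Heisenberg group); in characteristic $2$ this is itself a quoted theorem of Weil/Blasco and does not follow from Stone--von Neumann, so your fallback --- that for Theorem~\ref{RS thm} one only needs it on $j(SL_2(\FF))$, where it can be checked on generators and relations --- is the honest way to make the argument self-contained, and is worth promoting from an aside to the actual strategy. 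Second, your heuristic for the unipotent element (``fixes the Lagrangian carrying $\Sc(X)$, hence acts by a quadratic phase'') is sensitive to the convention for which of the two Lagrangians acts by translation and which by multiplication of characters, and as stated it may be the wrong way around; you flag this as bookkeeping, which is fair, but it is exactly the kind of bookkeeping that silently swaps the multiplication formula with its Fourier conjugate, so it should be carried out explicitly against the paper's coordinates $(u,u',v,v')$. Neither point is a gap in substance; the paper itself resolves both by citation.
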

\begin{proof}[Proof of Theorem \ref{RS thm}]
The projective action of $PSp$ on $\Sc(V\oplus V^*)$ gives a projective representation of $PSp$ on $\Sc^*(V\oplus V^*)$.
Looking in the formulas for this action, we see that if $\xi\in \Sc^*(V\oplus V^*)$ is as in the formulation of the theorem, we have
$$\rho j\begin{pmatrix}1&u\\0&1\end{pmatrix}(\xi)=\pm \xi$$
and also
$$\rho j \begin{pmatrix}1&0\\u&1\end{pmatrix}(\xi)=\rho j\left(\begin{pmatrix}0&1\\1&0\end{pmatrix}\begin{pmatrix}1&u\\0&1\end{pmatrix}\begin{pmatrix}0&1\\1&0\end{pmatrix}\right)(\xi)=\pm\xi.$$ Since elements of the form $\begin{pmatrix}1&u\\0&1\end{pmatrix}$ and $\begin{pmatrix}1&0\\u&1\end{pmatrix}$ generate $SL_2(\FF)$, we get that $\xi$ is $SL_2(\FF)$ $\pm$-invariant. I particular $\rho j \begin{pmatrix}t&0\\0&t^{-1}\end{pmatrix}(\xi)=\pm\xi$. So we get that $$|t|^{-\dim V}\cdot |(t\xi)(\Phi)| = |\xi(\Phi)|$$
as desired.
\end{proof}
\bibliographystyle{acm}
\bibliography{citations}
\end{document}